\def\BBox{\kern  -0.2cm\hbox{\vrule width 0.2cm height 0.2cm}}
\newtheorem{lemma}{Lemma}[section]
\newtheorem{theorem}{Theorem}[section]
\newtheorem{definition}{Definition}[section]
\newtheorem{proposition}{Proposition}[section]
\newtheorem{remark}{Remark}[section]
\begin{document}

\title{On transversal and $2$-packing numbers in straight line systems on $\mathbb{R}^{2}$
}
\author{
G. Araujo-Pardo\thanks{garaujo@math.unam.mx}, L. Montejano\thanks{luis@matem.unam.mx},\\{\small  Instituto de Matem\'aticas}\\ {\small Universidad Nacional
Auton\'oma de M\'exico}\\[1ex]\\A. Montejano\thanks{montejano.a@gmail.com}
\\{\small Unidad Multidisciplinaria de Docencia e Investigación de Juriquilla}%
\\
{\small Universidad Nacional Auton\'oma de M\'exico}\\
{\small Campus Juriquilla, Querétaro.}\\[1ex]\\
A. Vázquez-Ávila\thanks{adrian.vazquez@unaq.edu.mx}\\
{\small Subdirección de Ingeniería y Posgrado}\\
{\small Universidad Aeronáutica en Querétaro}\\
}

\date{}
\maketitle

\begin{abstract}
A \emph{linear system} is a pair $(X,\mathcal{F})$ where
$\mathcal{F}$ is a finite family of subsets on a ground set $X$, and
it satisfies that $|A\cap B|\leq 1$ for every pair of distinct
subsets $A,B \in \mathcal{F}$. As an example of a linear system are
the straight line systems, which family of subsets are straight line
segments on $\mathbb{R}^{2}$. By $\tau$ and $\nu_2$ we denote the size of the minimal transversal and the 2--packing numbers of a linear system respectively. A natural problem is asking about the relationship of these two parameters; it is not difficult to prove that there exists a quadratic function $f$ holding $\tau\leq f(\nu_2)$. However, for straight line system we believe that $\tau\leq\nu_2-1$. In this paper we prove that for any linear system with $2$-packing numbers $\nu_2$ equal to $2, 3$ and $4$, we have that $\tau\leq\nu_2$. Furthermore, we prove that  the linear systems that attains the equality have transversal and $2$-packing numbers equal to $4$, and they are a special family of linear subsystems of the projective plane of order $3$. Using this result we confirm that all straight line systems with $\nu_2\in\{2,3,4\}$ satisfies $\tau\leq\nu_2-1$.
\end{abstract}


\textbf{Key words.} Linear systems, straight line systems, transversal,\\ $2$--packing,
projective plane.


\section{Introduction}

\label{sec:intro} A \emph{set system} is a pair $(X,\mathcal{F})$ where $%
\mathcal{F}$ is a finite family of subsets on a ground set $X$. A
set system
can be also thought of as a hypergraph, where the elements of $X$ and $\mathcal{%
F}$ are called \emph{vertices} and \emph{hyperedges} respectively.

\begin{definition}
A subset $T\subset X$ is called a \emph{transversal} of
$(X,\mathcal{F})$, if it intersects all the sets of $\mathcal{F}$.
The \emph{transversal number} of $(X,\mathcal{F}),$ denoted by $\tau
(X,\mathcal{F}),$ is the smallest possible cardinality of a
transversal of $(X,\mathcal{F}).$
\end{definition}

Transversal numbers have been studied in the literature in many
diffe\-rent contexts and names. For example with the name of
\emph{piercing number} and
\emph{covering number} (see  \cite{AK06,AK06_2,AKMM01,Eckhoff,Huicochea,Tancer,MS11}).

A system $(X,\mathcal{F})$ is a \emph{$\lambda $-Helly system}, if $\mathcal{%
F}$ satisfies the \emph{$\lambda $-Helly property}, that is, if
every subfamily $\mathcal{F}^{^{\prime }}\subset \mathcal{F}$ has
the property that any $(\lambda +1)$-tuple of $\mathcal{F}^{^{\prime
}}$ is intersecting, then $\mathcal{F}^{^{\prime }}$ is
intersecting. Examples of $\lambda$-Helly systems are families of
convex sets in $\mathbb{R}^{\lambda }$ and the
systems arriving from a $\lambda $-hypergraph as following: Let $G$ be a $%
\lambda $-hypergraph, and consider the set $V(G)$ and the family
$\mathbb{I}$ of maximal independent subset of vertices of $G$ (where
$I\subset V(G)$ is independent, if there is no edge $e \in
E(G)$ such that $e\subset I)$. We associate to the $\lambda $-hypergraph $G$ the following set system $(\mathbb{I},V^{\ast }),$ where $V^{\ast }=\{v^{\ast }\mid v\in
V(G)\}$ and $v^{\ast }=\{S\in \mathbb{I}\mid v\in S\}. $ Then it is
not difficult to see that $\tau (\mathbb{I},V^{\ast })$ is the \emph{%
chromatic number} $\chi (G)$ of $G$. Furthermore, the system $(\mathbb{I}%
,V^{\ast })$ is a $\lambda $-Helly system.

\begin{definition}
A set system $(X,\mathcal{F})$ is called a \emph{linear system}, if
it satisfies $|A\cap B|\leq 1$ for every pair of distinct subsets
$A,B \in \mathcal{F}$.
\end{definition}
Note that any linear system $(X,\mathcal{F})$
is a $2$-Helly system and therefore its transversal number $\tau
(X,\mathcal{F})$ can be
regarded as the chromatic number of the $3$-hypergraph $G$, such that $V(G)=%
\mathcal{F}$ and $\{A,B,C\}\in E(G)$, if and only if, $A\cap B\cap
C=\emptyset $.

\begin{definition}
A subset $R\subseteq\mathcal{F}$ is called a \emph{$2$-packing} of a set system $(X,\mathcal{F})$, if the elements of $R$ are triplewise disjoint. The $2$-packing number of $(X,\mathcal{F})$, denoted by
$\nu_2(X,\mathcal{F})$, is the greatest possible number of a
$2$-packing of $(X,\mathcal{F})$.
\end{definition}

 Note that for a linear system its
$2$-packing number $\nu_2(X,\mathcal{F})$ can be regarded as the
\emph{clique number} $\omega(G)$ of the $3$-hypergraph $G$ described
above. So, for linear systems $(X,\mathcal{F})$ we have:
\begin{equation*}\label{desigualdad}
\lceil \nu_{2}(X,\mathcal{F})/2\rceil \leq \tau
(X,\mathcal{F})\leq\frac{\nu_2(\nu_2-1)}{2},
\end{equation*}
since any maximum $2$--packing of $(X,\mathcal{F})$ induces at most
$\frac{\nu_2(\nu_2-1)}{2}$ double points (points incident to two lines).
In general the transversal number $\tau (X,\mathcal{F})$ of a $\lambda$-Helly system can be arbitrarily large even if $\nu _{\lambda }(X,\mathcal{F}%
)$ is small. There are many interesting works studying the
relationship between $\tau (X,\mathcal{F})$ and $\nu _{\lambda
}(X,\mathcal{F})$, and of course recording the problem of giving a
bound of $\tau (X,\mathcal{F})$ in terms of a function of $\nu
_{2}(X,\mathcal{F})$ (see \cite{AK06}). For linear
systems in a more general context there are bounds to transversal
number \cite{ChD,DF}.

In this paper we denote linear systems by $(P,\mathcal{L})$, where
the elements of $P$ and $\mathcal{L}$ are called \emph{points} and
\emph{lines} respectively. 

We study some specific linear
systems called \emph{straight line systems}, which are defined
below. Some results of this kind of linear systems related with this
work appears in \cite{Tancer}.

\begin{definition}
A \emph{straight line representation} on $\mathbb{R}^{2}$ of a
linear system $(P,\mathcal{L})$ maps each point $x\in P$ to a point
$p(x)$ of $\mathbb{R}^{2}$, and each line $F\in\mathcal{L}$ to a
straight line segment $l(F)$ of $\mathbb{R}^{2}$ in such way that
for each point $x\in P$ and line $F\in\mathcal{L}$ we have $p(x)\in
l(F)$, if and only if, $x\in F$, and for each pair of distinct lines
$F,H\in\mathcal{F}$ we have $l(F)\cap l(H)=\{p(x):x\in F\cap H\}$. A
\emph{straight line system} $(P,\mathcal{L})$ is a linear system,
such that it has a straight line representation on $\mathbb{R}^{2}$.
\end{definition}

The main result of this work is set in the following theorem:

\begin{theorem}\label{thm:main-main}
Let $(P,\mathcal{L})$ be a straight line system with $|\mathcal{L}|>\nu_2(P,\mathcal{L})$. If $\nu_2(P,\mathcal{L})\in\{2,3,4\}$, then
$\tau(P,\mathcal{L})\leq\nu_2(P,\mathcal{L})-1$.
\end{theorem}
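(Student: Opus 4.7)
The overall strategy is to first establish, for every linear system with $\nu_2\in\{2,3,4\}$ and $|\mathcal{L}|>\nu_2$, the purely combinatorial bound $\tau\leq\nu_2$, together with a tight characterization of the equality cases; then use the straight-line hypothesis only in the last case to strengthen the inequality to $\tau\leq\nu_2-1$. In each case I would start from a maximum $2$-packing $R=\{\ell_1,\ldots,\ell_{\nu_2}\}$, record its $\binom{\nu_2}{2}$ pairwise intersection points, and analyze how an arbitrary line of $\mathcal{L}\setminus R$ is forced to interact with $R$ by the maximality of the packing.

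For $\nu_2=2$, every triple of lines of $\mathcal{L}$ must share a common point. Picking any three of them concurrent at $p$ and adding a fourth line $\ell$, the triples $\{\ell_1,\ell_2,\ell\}$ and $\{\ell_1,\ell_3,\ell\}$ both have a common point, which by linearity must be $p$; hence all lines pass through $p$ and $\tau=1=\nu_2-1$. For $\nu_2=3$, the three pairwise intersection points $p_{12},p_{13},p_{23}$ of the maximum packing form a transversal, and any extra line must pass through at least one of them (otherwise it extends $R$ to a $4$-packing). I would then show that two of these three points suffice: if not, three \emph{escape} lines $\ell_a,\ell_b,\ell_c$ avoiding the respective pairs $\{p_{13},p_{12}\}$, $\{p_{12},p_{23}\}$, $\{p_{23},p_{13}\}$ exist, and a careful analysis of their mutual intersections produces a $4$-packing, contradicting $\nu_2=3$. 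This yields $\tau\leq 2=\nu_2-1$ even for abstract linear systems.

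The genuinely delicate case is $\nu_2=4$. A generalization of the previous arguments gives the combinatorial bound $\tau\leq 4$, but equality can now occur for abstract linear systems. The plan is to show that whenever $\tau=\nu_2=4$, the six double points of the $4$-packing together with the incidences forced by the remaining lines assemble into a linear subsystem of the projective plane $PG(2,3)$ of order $3$; this requires a case analysis exploiting linearity to pin down, for each extra line, the unique pair of intersection points it may pass through. Once the classification is in hand, the straight-line hypothesis is used to exclude these configurations in $\mathbb{R}^2$: the rigid incidence pattern inherited from $PG(2,3)$ forces collinearities among generic points that are geometrically impossible, which can be detected either via a Sylvester--Gallai-type obstruction or by a direct orientation and convex-position argument on the six double points of the $4$-packing.

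The principal obstacle is the classification step for $\nu_2=4$, because many candidate structures must be eliminated or collapsed into the special $PG(2,3)$-subsystem family; the combinatorics quickly branches, so a careful bookkeeping of which intersection points each new line may hit is essential. The subsequent non-realizability step is also non-trivial and is where the geometric hypothesis finally enters: one must exhibit a concrete real-plane obstruction, showing that the combinatorial rigidity enforced by $\tau=\nu_2=4$ cannot coexist with a straight-line realization of all the lines through their prescribed intersection points.
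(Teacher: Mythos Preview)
Your overall plan matches the paper's: prove $\tau\leq\nu_2$ combinatorially for $\nu_2\in\{2,3,4\}$, classify the $\nu_2=\tau=4$ equality cases as linear subsystems of $\Pi_3$, and then show these cannot be straight-line systems. The organization of the classification differs, however: the paper splits on the maximum degree $\Delta(P,\mathcal{L})$ (Lemma~\ref{lem:deg5} for $\Delta\ge 5$, Lemma~\ref{lem:Delta=3} for $\Delta=3$, Lemma~\ref{lem:Delta=4} for $\Delta=4$) rather than anchoring everything on a maximum $2$-packing. This degree split is what keeps the branching under control and produces the single exceptional system $\mathcal{C}_{3,4}$ and the family $\mathcal{C}_{4,4}$ explicitly; your packing-based bookkeeping could in principle arrive at the same endpoint, but you will find that the double points of a $4$-packing need not all exist, so the ``six double points'' framework already requires side cases.

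The substantive divergence, and the place where your sketch has a genuine gap, is the non-realizability step. The paper does \emph{not} use a Sylvester--Gallai or convex-position obstruction. Instead it invokes that every straight-line system is Zykov-planar, equivalently (by Walsh) that its bipartite incidence graph $B(P,\mathcal{L})$ is planar, and then checks that $B(\mathcal{C}_{3,4})$ is non-planar; since $\mathcal{C}_{3,4}\subset\mathcal{C}$ and $\mathcal{C}$ sits inside every member of $\mathcal{C}_{4,4}$, this disposes of all equality cases at once. Your proposed Sylvester--Gallai route is problematic because the ``lines'' of a straight-line system are \emph{segments}: two combinatorially disjoint lines of $(P,\mathcal{L})$ may lie on the same affine line of $\mathbb{R}^2$, so the point--line configuration induced in the real plane need not mirror the combinatorial one, and in $\mathcal{C}_{3,4}$ the exhibited $4$-packing already has a disjoint pair, so your ``six double points'' are only five. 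As stated, that step does not go through; replacing it with the incidence-graph planarity argument closes the proof immediately.
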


We believe that Theorem \ref{thm:main-main} is true in general, that is $\tau(P,\mathcal{L})\leq\nu_2(P,\mathcal{L})-1$,
for $\nu_2(P,\mathcal{L})\geq2$, which seems to be extremely difficult to prove. For the cases where the $2$-packing
number is equal to $2$ or $3$ its proof is easy (see propositions \ref{prop:helly} and \ref{prop:3,2}), and the interesting case is when $\nu_2=4$.

To prove Theorem \ref{thm:main-main} we use the following theorem, which is one of the main results of this work.  

\begin{theorem}\label{thm:main}
Let $(P,\mathcal{L})$ be a linear system with $|\mathcal{L}|>4$. If
$\nu_2(P,\mathcal{L})=4$, then $\tau(P,\mathcal{L})\leq4$. Moreover,
if $\tau(P,\mathcal{L})=\nu_2(P,\mathcal{L})=4$, then
$(P,\mathcal{L})$ is a linear subsystem of $\Pi_3$.
\end{theorem}

It is important to say that this problems is closely related with the Hadwiger-Debrunner $(p,q)$--property for linear set systems $(P,\mathcal{L})$ with $p=\nu_2(P,\mathcal{L})+1$ and $q=3$. A family of sets has the $(p,q)$ property, if among any $p$ members of the family some $q$ have a nonempty intersection. In this contest, our results states that, if $(P,\mathcal{L})$ is a linear system satisfying the $\left(\nu_2(P,\mathcal{L})+1,3\right)$ property, for $\nu_2(P,\mathcal{L})=2,3,4$, then $\tau(P,\mathcal{L})\leq\nu_2(P,\mathcal{L})$.  For more information about the Hadwiger-Debrunner $(p,q)$--property see \cite{B86,B95}.

Theorem \ref{thm:main} states that any linear system $(P,\mathcal{L})$ with $\nu_2(P,\mathcal{L})=4$ and $|\mathcal{L}|>4$ is such that $\tau(P,\mathcal{L})\leq4$, giving a characterization to those linear systems which transversal number is $4$. Furthermore, we prove that these linear systems have not a straight line representation on $\mathbb{R}^{2}$.

It is worth noting that such linear systems $(P,\mathcal{L})$ where $\nu_2(P,\mathcal{L})=\tau(P,\mathcal{L})=4$
are certain linear subsystems of the projective plane of order $3$ (Figure \ref{PP3}).

Recall that a \emph{finite projective plane} (or merely
\emph{projective plane}) is a linear system satisfying that any pair
of points have a common line, any pair of lines have a common point
and there exist four points in general
position (there are not three collinear points). It is well known that, if $%
(P,\mathcal{L})$ is a projective plane then there exists a number $q\in%
\mathbb{N}$, called \emph{order of projective plane}, such that
every point (line, resp.) of $(P,\mathcal{L})$ is incident to
exactly $q+1$ lines (points, resp.), and $(P,\mathcal{L})$ contains
exactly $q^2+q+1$ points (lines, resp.). In addition it is well known that projective planes
of order $q$ exist when $q$ is a power prime. In this work we denote by $%
\Pi_q$ the projective plane of order $q$. For more information about
the existence and the unicity of projective planes see, for
instance, \cite{B86,B95}.

Concerning the transversal number of projective planes it is well
known that every line in $\Pi_q$ is a transversal, then
$\tau(\Pi_q)\leq q+1$. On the other hand $\tau(\Pi_q)\geq q+1$
since a transversal with less than $q$ points cannot exist by a
counting argument (recall that every point in $\Pi_q$ is incident to
exactly $q+1$ lines and the total number of lines is equal to
$q^2+q+1$). Now, related to the $2$-packing number, since
projective planes are dual systems, this parameter coincides with
the cardinality of an \emph{oval}, which is the maximum number of
points in general position (no three of them collinear), and it is
equal to $q+1$ when $q$ is odd  (see for example \cite{B95}).
Consequently, for projective planes $\Pi_q$ of odd order $q$ we
have that $\tau (\Pi_q)=\nu_2(\Pi_q)=q+1$.

In this work we prove, beyond of Theorem \ref{thm:main-main}, if $(P,\mathcal{L})$ is a linear system satisfying
$|\mathcal{L}|>\nu_2(P,\mathcal{L})$ with $\nu_2(P,\mathcal{L})\in\{2,3,4\}$, then
$\tau(P,\mathcal{L})\leq\nu_2(P,\mathcal{L})$; and that every
projective plane $\Pi_q$ of odd order satisfies $\tau
(\Pi_q)=\nu_2(\Pi_q)=q+1$. Furthermore, it is not difficult to prove
that, if $(P,\mathcal{L})$ is a $2$-uniform li\-near system (a simple
graph) with $|\mathcal{L}|>\nu_2(P,\mathcal{L})$, then
$\tau(P,\mathcal{L})\leq\nu_2(P,\mathcal{L})-1$. It is tempting to
conjecture that any linear system $(P,\mathcal{L})$ with
$|\mathcal{L}|>\nu_2(P,\mathcal{L})$ satisfies
$\tau(P,\mathcal{L})\leq\nu_2(P,\mathcal{L})$. Unfortunately that is
not true, in \cite{Eustis} proved, using probabilistic methods the
existence of $k$-uniform linear systems $(P,\mathcal{L})$ for
infinitely many $k$´s and $n=|P|$ large enough, which transversal
number is $\tau(P,\mathcal{L})=n-o(n)$. This $k$-uniform linear
systems has $2$-packing number upper bounded by $\frac{2n}{k}$,
therefore $\nu_2(P,\mathcal{L})<\tau(P,\mathcal{L})$. Moreover, this
implies that $\tau\leq\lambda\nu_2$ does not hold for any positive
$\lambda$.


\section{Results}

Before continuing we give some basic concepts and standard notation \-although many of them can be applied for general set systems.
Let $(P,\mathcal{L})$ be a linear system and $p\in P$ be a point. We use $\mathcal{L}_p$ to denote the set of lines incident to $p$. The \emph{degree} of
$p$ is defined as $deg(p)=|\mathcal{L}_p|$, the maximum degree overall points of the linear systems is denoted by
$\Delta(P,\mathcal{L})$ and the set of points of degree at least $k$ is denoted by $X_k$, this is $X_k=\{p\in P: deg(p)\geq k\}$. A point
of degrees $2$ and $3$ is called \emph{double point} and \emph{triple point} respectively. Finally, a linear system $(P,%
\mathcal{L})$ is called $r$\emph{-regular}, if every point of $P$ has degree $r$, and $(P,\mathcal{L})$ is called $k$\emph{-uniform}, if every line of $%
\mathcal{L}$ has exactly $k$ points.

The following is a trivial observation that will be used later on in
order to avoid annoying cases.

\begin{remark}
\label{rmk:trivial} A linear system $(P,\mathcal{L})$ satisfies $\Delta(P,\mathcal{L})\leq2$, if and only if, $\nu_2(P,\mathcal{L})=|\mathcal{L}|$.
\end{remark}

Note that for linear systems $(P,\mathcal{L})$ with $|\mathcal{L}|>\nu_2(P,\mathcal{L})$ the meaning of
$\nu_2(P,\mathcal{L})=n$ is that, on the one hand there is at least one set of $n$ lines inducing no triple points, and on the other
hand any set of $(n+1)$ lines induces a triple point. In the next propositions \ref{prop:helly} and \ref{prop:3,2} we prove that any linear system $(P,\mathcal{L})$ with $|\mathcal{L}|>\nu_2(P,\mathcal{L})$ is such that $\tau(P,\mathcal{L})\leq\nu_2(P,\mathcal{L})-1$, for
$\nu_2(P,\mathcal{L})=2$ and $\nu_2(P,\mathcal{L})=3$ respectively; consequently, Theorem~\ref{thm:main-main}
holds for $\nu_2(P,\mathcal{L})=2$, and $\nu_2(P,\mathcal{L})=3$. In \cite{Tancer} we proved that, if $(P,\mathcal{L})$ is a straight line
system with the property that, if any $4$ members of $\mathcal{L}$ have a triple point, then $\tau(P,\mathcal{L})\leq2$, that is, if
$(P,\mathcal{L})$ is a straight line systems with $|\mathcal{L}|>4$ and $2\leq\nu_2(P,\mathcal{L})\leq3$, then
$\tau(P,\mathcal{L})\leq2$, which is also a consequence of the propositions \ref{prop:helly} and \ref{prop:3,2} proved below.

\begin{proposition}\label{prop:helly}
If $(P,\mathcal{L})$ is any linear system with $\nu_2(P,\mathcal{L})=2$ and $|\mathcal{L}|>2$, then $\tau(P,\mathcal{L})=1$.
\end{proposition}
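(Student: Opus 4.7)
The statement is essentially a direct consequence of the $2$-Helly property enjoyed by linear systems, combined with the hypothesis $|\mathcal{L}|>2$. My plan is to unpack the meaning of $\nu_2(P,\mathcal{L})=2$ and then exhibit a single point that hits every line.

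First I would translate the hypothesis: $\nu_2(P,\mathcal{L})=2$ means that no three lines of $\mathcal{L}$ are triplewise disjoint, i.e.\ every three lines of $\mathcal{L}$ share a common point. Since $|\mathcal{L}|>2$, there is at least one triple available, so in particular every two lines of $\mathcal{L}$ must actually intersect: if $F_1\cap F_2=\emptyset$, picking any third line $F_3$ (which exists by $|\mathcal{L}|>2$) would give a triple without a common point, contradicting $\nu_2=2$.

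Next I would fix two lines $F_1,F_2\in\mathcal{L}$; by linearity $|F_1\cap F_2|\leq 1$, and by the previous paragraph $F_1\cap F_2\neq\emptyset$, so $F_1\cap F_2=\{p\}$ for a unique point $p\in P$. For any other line $F\in\mathcal{L}$, the triple $\{F_1,F_2,F\}$ has a common point by the $\nu_2=2$ hypothesis; but this common point lies in $F_1\cap F_2=\{p\}$, so it must equal $p$, and in particular $p\in F$. Thus every line of $\mathcal{L}$ contains $p$, so $\{p\}$ is a transversal and $\tau(P,\mathcal{L})\leq 1$. Since $\mathcal{L}\neq\emptyset$ forces $\tau(P,\mathcal{L})\geq 1$, equality follows.

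I do not expect any serious obstacle: the only subtlety is handling the empty-intersection case for two lines, which is exactly why the hypothesis $|\mathcal{L}|>2$ is needed (with only two lines, they could be disjoint and $\tau$ would be $2$, not $1$). Structurally this is nothing more than the $2$-Helly property for $(P,\mathcal{L})$ applied to the whole family $\mathcal{L}$, using that every $3$-subfamily is intersecting.
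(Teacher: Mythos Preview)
Your proof is correct and follows essentially the same route as the paper: the paper's one-line argument simply invokes the $2$-Helly property directly (since every triple of lines has a common point, the whole family has a common point), whereas you unpack that Helly step explicitly by fixing $F_1\cap F_2=\{p\}$ and showing $p\in F$ for every other $F$. The content is identical.
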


\begin{proof}
As any set of three lines has a common point then by $2$--Helly pro\-perty all lines of $\mathcal{L}$ have a common point, that is
$\tau(P,\mathcal{L})=1$.
\end{proof}

It is worth noting that the converse of Proposition~\ref{prop:helly} is also true, that is, any linear system $(P,\mathcal{L})$ with
$\tau(P,\mathcal{L})=1$ satisfies $\nu_2(P,\mathcal{L})=2$.

Next we establish an analogous statement to Proposition \ref{prop:helly} concerning linear systems, which $2$--packing
number is three.

\begin{proposition}\label{prop:3,2}
If $(P,\mathcal{L})$ is any linear system with $\nu_2(P,\mathcal{L})=3$ and $|\mathcal{L}|>3$, then
$\tau(P,\mathcal{L})=2$.
\end{proposition}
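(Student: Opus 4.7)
The plan is to fix a $2$-packing $\{L_1, L_2, L_3\}$ of size $3$ guaranteed by $\nu_2 = 3$, and to show that two of the pairwise intersection points $p_{ij} = L_i \cap L_j$ (when nonempty) always form a transversal. First, the lower bound $\tau \geq 2$ is immediate: if a single point $p$ covered every line, then $\{L_1, L_2, L_3\}$ would share $p$ and could not be a $2$-packing, contradicting $\nu_2 = 3$. The workhorse observation used repeatedly is that for any additional line $L$ (which exists since $|\mathcal{L}| > 3$), the $4$-subset $\{L_1, L_2, L_3, L\}$ must contain some triple with nonempty common intersection; since $L_1 \cap L_2 \cap L_3 = \emptyset$, this forces $L$ to pass through one of the points $p_{ij}$.

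The proof then splits on how many of the three pairs among $L_1, L_2, L_3$ actually meet. When the three lines are pairwise disjoint, every triple in $\{L_1, L_2, L_3, L\}$ is automatically empty, yielding a $4$-packing and contradicting $\nu_2 = 3$. When exactly one pair is disjoint, say $L_2 \cap L_3 = \emptyset$, the only triples in $\{L_1, L_2, L_3, L\}$ that can be nonempty must involve $p_{12}$ or $p_{13}$, so every line $L$ passes through $p_{12}$ or $p_{13}$, and $\{p_{12}, p_{13}\}$ is the desired transversal (noting $p_{12}\ne p_{13}$, else $L_2 \cap L_3 \ne \emptyset$).

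The main case is when all three pairs meet, necessarily at three distinct points $p_{12}, p_{13}, p_{23}$. I would argue by contradiction: if no pair of these points forms a transversal, then for at least two of the three pairs there exist lines $M$ and $N$ passing through $p_{23}$ only and $p_{13}$ only, respectively (among the three special points). The key computation is that $\{L_1, L_2, M, N\}$ is then a $2$-packing of size $4$: the triples $L_1L_2M$ and $L_1L_2N$ are empty because $p_{12}\notin M,N$; the triple $L_1MN$ is empty because $L_1 \cap N = \{p_{13}\}$ and $p_{13}\notin M$; and $L_2MN$ is empty because $L_2 \cap M = \{p_{23}\}$ and $p_{23}\notin N$. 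This contradicts $\nu_2 = 3$, so at least one pair $\{p_{ij}, p_{ik}\}$ is a transversal, yielding $\tau \leq 2$. The only subtle step is this triple-intersection bookkeeping in the last case; every line of the argument hinges on the fact that $M$ and $N$ were chosen precisely to avoid certain $p_{ij}$'s, which then forces the known pairwise meets to land off the fourth line.
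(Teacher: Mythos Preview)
Your argument is correct and takes a genuinely different route from the paper. The paper works via the degree structure: invoking Remark~\ref{rmk:trivial} to get $\Delta\geq 3$, it shows that the set $X_3$ of points of degree $\geq 3$ has exactly one element $p$ (two such points would already yield a $2$-packing of size $4$), then picks a second point $q$ of degree $2$ and proves $\mathcal{L}\setminus(\mathcal{L}_p\cup\mathcal{L}_q)=\emptyset$, so that $\{p,q\}$ is a transversal. You instead anchor everything on a maximum $2$-packing $\{L_1,L_2,L_3\}$ and show that two of its pairwise intersection points already form a transversal. Your approach is more self-contained (no appeal to the $\Delta$-remark) and pins down the transversal explicitly inside the $2$-packing; the paper's degree-based template, on the other hand, is precisely what gets recycled in the harder Lemmas~\ref{lem:Delta=3} and~\ref{lem:Delta=4}, so there it serves as a warm-up for the paper's later machinery.

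One small omission in your case analysis: you split on how many of the three pairs meet and handle $0$, $2$, and $3$ meeting pairs, but the case where exactly one pair meets (say $L_2\cap L_3=\{p_{23}\}$ while $L_1$ is disjoint from both $L_2$ and $L_3$) is not addressed. The same workhorse observation disposes of it: in $\{L_1,L_2,L_3,L\}$ the only triple that can possibly intersect is $L_2L_3L$, so every fourth line passes through $p_{23}$, and $\{p_{23},x\}$ for any $x\in L_1$ is a transversal. This is a one-line patch and does not affect the soundness of your method.
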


\begin{proof}
Recall that $\nu_2(P,\mathcal{L})=3$ implies that any set of four
lines induces a triple point. By Remark~\ref{rmk:trivial},
$\Delta(P,\mathcal{L})\geq 3$, thus the set of points of degree at
least $3$, $X_3$, is not empty. If $|X_3|\geq 2$ we can easily find
a set of four lines inducing no triple point (take two distinct
points in $X_3$, and two lines inciding at each). If $|X_3|=1$, let
$p\in P$ be the only point with $deg(p)\geq 3$. Assume that there is
another point $q\in P$, $q\neq p$, such that $deg(q)=2$, otherwise
$|\mathcal{L}\setminus\mathcal{L}_p|\leq 1$ and the statement holds
true. Now consider $\mathcal{L}''=\mathcal{L}\setminus
(\mathcal{L}_p \cup \mathcal{L}_{q})$. Note that
$\mathcal{L}''=\emptyset$, otherwise we can take four lines (two in
$\mathcal{L}_p$, one in $\mathcal{L}_{q}$ and one more in
$\mathcal{L}''$) inducing no triple point; a contradiction to the
hypothesis $\nu_2(P,\mathcal{L})=3$. Hence, the set $\{p,q\}$ is a
transversal, and $\tau(P,\mathcal{L})=2$ as stated.
\end{proof}

In view of Propositions~\ref{prop:helly} and~\ref{prop:3,2} it is tempting to try to prove that any linear system $(P,\mathcal{L})$
with $\nu_2(P,\mathcal{L})=4$ satisfies $\tau(P,\mathcal{L})\leq 3$. However, as we stated in the introduction the projective plane
$\Pi_3=(P,\mathcal{L})$ of order $3$ (Figure \ref{PP3}) satisfies $\nu_2(\Pi_3)=\tau(\Pi_3)=4$.

\begin{figure}[t]
\begin{center}
\includegraphics[height =4.5cm]{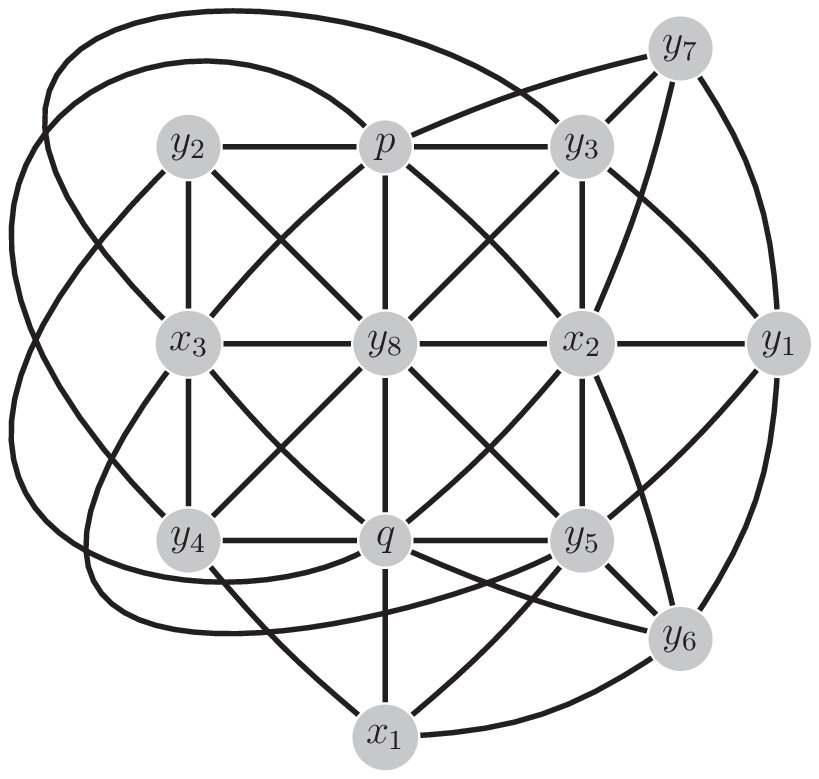}
\end{center}
\caption{} \label{PP3}
\end{figure}

The main work of this paper is to prove that any straight line system $(P,\mathcal{L})$ with $\nu_2(P,\mathcal{L})=4$, and
$|\mathcal{L}|>4$ is such that $\tau(P,\mathcal{L})\leq3$. To prove this we use Theorem \ref{thm:main}, that
is we prove that any linear system $(P,\mathcal{L})$ with $\nu_2(P,\mathcal{L})=4$, and $|\mathcal{L}|>4$ is such that
$\tau(P,\mathcal{L})\leq4$, giving a characterization to those linear systems, which transversal number is $4$, and we prove that
these linear systems have not a straight line representation on $\mathbb{R}^{2}$.

\begin{definition}
A {\emph{linear subsystem}} $(P^{\prime },\mathcal{L}^{\prime })$ of
a linear system $(P,\mathcal{L})$ satisfies that for any line $l^\prime\in\mathcal{L}^\prime$ there exists a line $l\in\mathcal{L}$
such that $l^\prime=l\cap P^\prime$. The \emph{linear subsystem induced} by a set of lines $\mathcal{L}^{\prime}\subseteq \mathcal{L}$ is the linear subsystem
$(P^{\prime },\mathcal{L}^{\prime })$ where
$P^{\prime}=\bigcup_{l\in \mathcal{L}^{\prime }} l$.
\end{definition}

One of the main results of this paper estates the following:

\begin{theorem}\label{thm:main}
Let $(P,\mathcal{L})$ be a linear system with $|\mathcal{L}|>4$. If
$\nu_2(P,\mathcal{L})=4$, then $\tau(P,\mathcal{L})\leq4$. Moreover,
if $\tau(P,\mathcal{L})=\nu_2(P,\mathcal{L})=4$, then
$(P,\mathcal{L})$ is a linear subsystem of $\Pi_3$.
\end{theorem}

In order to prove Theorem \ref{thm:main} we analyze different cases related to the maximum degree of the linear system. Note that
by Remark \ref{rmk:trivial}, a linear system $(P,\mathcal{L})$ satisfying the hypothesis of Theorem~\ref{thm:main} is such that
$\Delta(P,\mathcal{L})>2$. In Lemma \ref{lem:deg5} below we prove that linear systems with $\nu_2(P,\mathcal{L})=4$, and $\Delta(P,\mathcal{L})\geq 5$ are such that $\tau(P,\mathcal{L})\leq3$. The remaining cases, $\Delta(P,\mathcal{L})=3$ and $\Delta(P,\mathcal{L})=4$ are the
cases for which there are linear systems satisfying $\nu_2(P,\mathcal{L})=\tau(P,\mathcal{L})=4$. We handle those cases
in Section~\ref{sec:Delta=3} and Section~\ref{sec:Delta=4} respectively. In each case we describe all linear systems
$(P,\mathcal{L})$ satisfying $\nu_2(P,\mathcal{L})=\tau(P,\mathcal{L})=4$.

Before proceeding to the next section we will prove that linear
systems $(P,\mathcal{L})$ with $\nu_2(P,\mathcal{L})=4$, and
$\Delta(P,\mathcal{L})\geq 5$ are such that $\tau(P,\mathcal{L})=2$,
except for a particular case, which satisfies $\tau(P,\mathcal{L})=3$.

\begin{lemma}
\label{lem:deg5} Any linear system $(P,\mathcal{L})$ with $\nu_2(P,\mathcal{L})=4$, and $\Delta(P,\mathcal{L})\geq 5$ satisfies
$\tau(P,\mathcal{L})\leq3$.
\end{lemma}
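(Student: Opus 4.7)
The plan is to pick a point $p$ with $\deg(p)\geq 5$ and to analyze $S:=\mathcal{L}\setminus\mathcal{L}_p$, the set of lines avoiding $p$. Since $p$ already pierces every line of $\mathcal{L}_p$, it suffices to cover $S$ with at most two additional points. Throughout, I would use the hypothesis $\nu_2(P,\mathcal{L})=4$ in the form ``every five lines contain a triple point''.

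First I would dispose of the easy cases $|S|\leq 2$. If $|S|\leq 1$, then any three of the $\geq 5$ lines of $\mathcal{L}_p$ share $p$, forcing $\nu_2\leq 3$ and contradicting the hypothesis. If $|S|=2$, then $p$ together with one arbitrary point of each line of $S$ is a transversal of size at most $3$, with equality exactly when the two lines of $S$ are disjoint; this is the ``particular case'' with $\tau=3$ announced before the statement.

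The main case is $|S|\geq 3$. The key claim is that every three lines of $S$ share a common point; linearity then forces all of $S$ to concur at a single point $q$, and $\{p,q\}$ is a transversal of size $2$. To prove the claim, fix $m_1,m_2,m_3\in S$, assume towards a contradiction that they have no common point, and let $A$ be the set of their pairwise intersection points, so $|A|\leq 3$ and $p\notin A$. Each $a\in A$ lies on at most one line of $\mathcal{L}_p$ (the unique line joining $p$ and $a$, by linearity), so from $\deg(p)\geq 5$ one can choose two lines $\ell,\ell'\in\mathcal{L}_p$ that avoid $A$ altogether. Applying $\nu_2(P,\mathcal{L})=4$ to the five lines $\{m_1,m_2,m_3,\ell,\ell'\}$ yields a triple point $q\neq p$, and a short case check rules out every possibility: a triple using both $\ell$ and $\ell'$ concurs only at $p\notin m_k$; a triple of the form $\{\ell,m_i,m_j\}$ or $\{\ell',m_i,m_j\}$ would force $\ell$ or $\ell'$ through a point of $A$; and $\{m_1,m_2,m_3\}$ is excluded by assumption.

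The main obstacle is precisely this concurrency claim, and it is where the hypothesis $\Delta(P,\mathcal{L})\geq 5$ is essential: the pigeonhole inequality $\deg(p)\geq 5>3\geq |A|$ is what guarantees the two avoiding lines $\ell,\ell'$. For $\deg(p)\leq 4$ the step collapses, which is consistent with the existence of linear systems achieving $\nu_2=\tau=4$ with $\Delta\in\{3,4\}$ to be analyzed in the subsequent sections.
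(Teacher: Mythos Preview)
Your proof is correct and follows essentially the same approach as the paper. Both pick a point $p$ of degree at least $5$, dispose of the cases $|\mathcal{L}\setminus\mathcal{L}_p|\leq 2$ directly, and for $|\mathcal{L}\setminus\mathcal{L}_p|\geq 3$ use the pigeonhole argument (at most three double points among $m_1,m_2,m_3$, hence two lines of $\mathcal{L}_p$ avoid them) to contradict $\nu_2=4$. The only cosmetic difference is that the paper phrases the key claim as ``$\nu_2(P',\mathcal{L}')=2$'' and then invokes Proposition~\ref{prop:helly}, whereas you state it as ``every three lines of $S$ concur'' and deduce concurrency of all of $S$ directly from linearity; you are also more explicit in the final case check ruling out a triple point in $\{m_1,m_2,m_3,\ell,\ell'\}$, which the paper leaves implicit.
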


\begin{proof}
Recall that $\nu_2(P,\mathcal{L})=4$ implies that any set of five lines induces a triple point. Consider $p\in X_5$, and define
$\mathcal{L}'=\mathcal{L}\setminus \mathcal{L}_p$. Let $(P',\mathcal{L}')$ be the linear subsystem induced by
$\mathcal{L}'$. Note that $|\mathcal{L}'|\geq2$, otherwise $\nu_2(P,\mathcal{L})\leq3$, a contradiction to the hypothesis
$\nu_2(P,\mathcal{L})=4$. If $\mathcal{L}'=\{l_1,l_2\}$, then $\{p,l_1\cap l_2\}$ is a minimum transversal of $(P,\mathcal{L})$, if
$l_1\cap l_2\neq\emptyset$, or else (when $l_1\cap l_2=\emptyset$) the linear system satisfies $\tau(P,\mathcal{L})=3$. On the other
hand, if $|\mathcal{L}'|\geq 3$ we claim that $\nu_2(P',\mathcal{L}')=2$ from which it follows by Proposition~\ref{prop:helly}
that $\tau(P',\mathcal{L}')=1$, therefore  $\tau(P,\mathcal{L})=2$. To verify the claim, suppose on the contrary that there are a set
of three lines $\{l_1, l_2, l_3\}$ of $\mathcal{L}'$ inducing no triple point. This set of three lines induces at most three double points.
By the Pigeonhole Principle there are at least two lines $l, l'\in \mathcal{L}_p$, which do not contain any of these double points,
then the set $\{l,l',l_1, l_2, l_3\}$ induces no triple point; a contradiction to the hypothesis $\nu_2(P,\mathcal{L})=4$.
\end{proof}
\section{The case when $\Delta(P,\mathcal{L})=3$}\label{sec:Delta=3}

We begin this section by introducing some terminology, which will
simplify the description of linear systems $(P,\mathcal{L})$ with $\Delta(P,\mathcal{L})=3$, and
$\nu_2(P,\mathcal{L})=\tau(P,\mathcal{L})=4$.

\begin{definition}
Given a linear system $(P,\mathcal{L})$, and a point $p\in P$, the linear system obtained from $(P,\mathcal{L})$ by
\emph{deleting point $p$} is the linear system $(P^{\prime },\mathcal{L}^{\prime })$ induced by $\mathcal{%
L}^{\prime }=\{l\setminus \{p\}: l\in \mathcal{L}\}$. Given a linear system $%
(P,\mathcal{L})$ and a line $l\in \mathcal{L}$, the linear system
obtained
from $(P,\mathcal{L})$ by \emph{deleting line $l$} is the linear system $%
(P^{\prime },\mathcal{L}^{\prime })$ induced by $\mathcal{L}^{\prime }=%
\mathcal{L}\setminus \{l\}$.
\end{definition}

It is important to state that in the rest of this paper we consider
linear systems $(P,\mathcal{L})$ without points of degree one
because, if $(P,\mathcal{L})$ is a linear system which has all lines
with at least two points of degree $2$ or more, and
$(P',\mathcal{L}')$ is the linear system obtained from
$(P,\mathcal{L})$ by deleting all points of degree one, then they
are essentially the same linear system because it is not difficult
to prove that transversal and $2$-packing numbers of both coincide.

\begin{definition}
Let $(P^{\prime},\mathcal{L}^{\prime })$ and $(P,\mathcal{L})$ be two linear systems. $(P^{\prime},\mathcal{L}^{\prime })$ and $(P,\mathcal{L})$ are isomorphic, and we write $(P^{\prime },\mathcal{L}^{\prime })\simeq(P,\mathcal{L})$, if after deleting vertices of degree 1 or 0 from both, the systems $(P^{\prime},\mathcal{L}^{\prime })$ and $(P,\mathcal{L})$ are isomorphic as hypergraphs.
\end{definition}

\begin{definition}
\label{def:cucarachita} Consider any point $k$, and any line $l$ of
$\Pi_3$, such that $k\not\in l$. We define $\mathcal{C}_{3,4}$ to be
the linear system obtained from $\Pi_3$ by:
\begin{itemize}
\item[i)] deleting point $k$, and its four incident lines,

\item[ii)] deleting line $l$ and its four points.
\end{itemize}
\end{definition}

The linear system $\mathcal{C}_{3,4}=(P_{\mathcal{C}_{3,4}},\mathcal{L}_{%
\mathcal{C}_{3,4}})$ just defined is a $3$-regular and $3$-uniform
linear system with eight points, and eight lines, described as:
\begin{eqnarray*}
P_{\mathcal{C}_{3,4}}&=&\{p,q,x_1,x_2,x_3,y_1,y_3,y_4\}, \\
\mathcal{L}_{\mathcal{C}_{3,4}}&=&\{\{p,y_1,y_3\},\{x_2,x_3,y_1\},%
\{q,y_1,y_4\},\{x_1,x_3,y_4\}, \\
&&\{p,q,x_1\},\{x_1,x_2,y_3\},\{q,x_3,y_3\},\{p,x_2,y_4\}\}.
\end{eqnarray*}
and depicted in Figure \ref{fig:(3,4)cucaracha}. In the next Proposition \ref{prop:(3,4)cucaracha} and Lemma \ref{lem:Delta=3}  we prove that  if $(P,\mathcal{L})$ satisfies $\nu_2(P,\mathcal{L})=4$ and $\Delta(P,\mathcal{L})=3$, then $\tau(P,\mathcal{L})\leq4$; moreover the equality holds only if $(P,\mathcal{L})\simeq\mathcal{C}_{3,4}$.

\begin{figure}[t]
\begin{center}
\includegraphics[height =4.5cm]{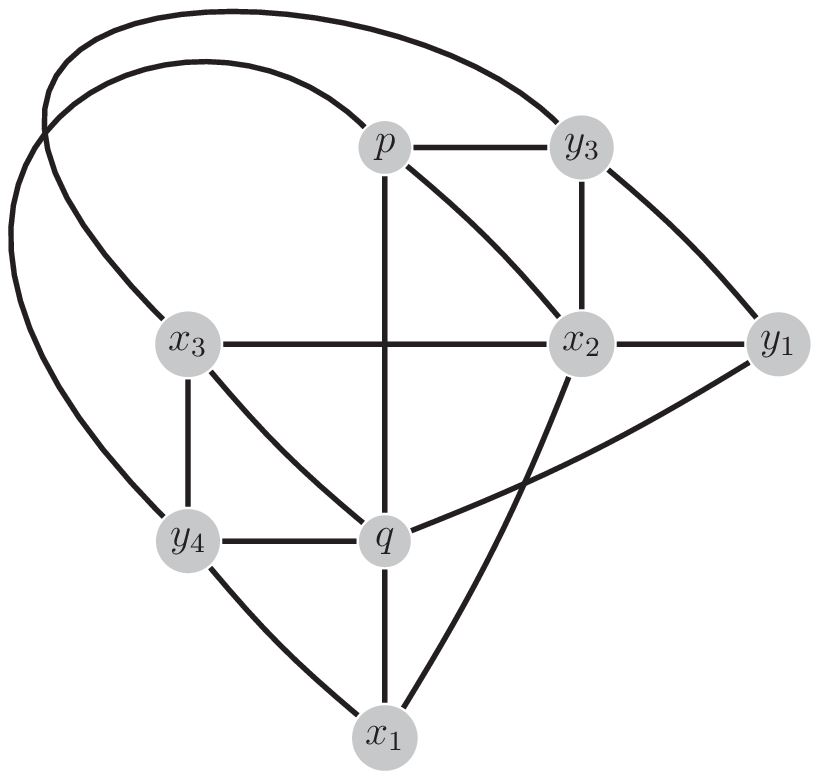}
\end{center}
\caption{} \label{fig:(3,4)cucaracha}
\end{figure}

\begin{proposition}
\label{prop:(3,4)cucaracha} $\nu_2(\mathcal{C}_{3,4})=\tau(\mathcal{C}%
_{3,4})=4$.
\end{proposition}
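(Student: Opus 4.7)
The plan is to prove $\nu_2(\mathcal{C}_{3,4}) = \tau(\mathcal{C}_{3,4}) = 4$ by two pairs of matched inequalities: explicit witnesses for $\nu_2\ge 4$ and $\tau\le 4$, and a shared inclusion--exclusion argument for $\tau\ge 4$ and $\nu_2\le 4$. For the witnesses, I would take the four lines $\{p,y_1,y_3\},\{x_2,x_3,y_1\},\{x_1,x_3,y_4\},\{p,x_2,y_4\}$ and verify directly that their six pairwise intersections are the five distinct points $y_1,p,x_3,x_2,y_4$ (one pair being disjoint), none of which lies on a third of the four lines, giving $\nu_2(\mathcal{C}_{3,4})\ge 4$; and check that $T=\{x_1,y_1,y_3,y_4\}$ meets each of the eight listed lines, giving $\tau(\mathcal{C}_{3,4})\le 4$.

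The reverse inequalities rest on a single structural observation: in the $3$-regular, $3$-uniform system $\mathcal{C}_{3,4}$, both the non-collinear pairs of points and the disjoint pairs of lines form a perfect matching. Each of the eight lines contributes $\binom{3}{2}=3$ collinear pairs of points, exhausting $24$ of the $\binom{8}{2}=28$ point-pairs, so exactly four pairs are non-collinear; direct inspection identifies these as $\{p,x_3\},\{q,x_2\},\{x_1,y_1\},\{y_3,y_4\}$, covering each of the eight points exactly once. Dually, each point contributes $\binom{3}{2}=3$ intersecting pairs of lines (one intersection point per pair by linearity), so exactly $28-24=4$ pairs of lines are disjoint, and one checks that $\{L_1,L_4\},\{L_2,L_5\},\{L_3,L_6\},\{L_7,L_8\}$ cover each of the eight lines exactly once.

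For $\tau(\mathcal{C}_{3,4})\ge 4$, suppose a transversal $T=\{a,b,c\}$ of size $3$ exists. Since $\mathcal{C}_{3,4}$ is $3$-regular, inclusion--exclusion on the sets of lines through $a,b,c$ gives
\[
|\mathcal{L}_a\cup\mathcal{L}_b\cup\mathcal{L}_c|=9-\alpha-\beta-\gamma+\delta,
\]
with $\alpha,\beta,\gamma\in\{0,1\}$ collinearity indicators of the three pairs in $T$ and $\delta\le\min(\alpha,\beta,\gamma)$ the number of lines through all three; requiring the union to equal $8$ rules out $\delta=1$ (which would force $\alpha+\beta+\gamma=3$) and leaves exactly one collinear pair in $T$, so $T$ contains two non-collinear pairs sharing a vertex---impossible in a matching. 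The argument for $\nu_2(\mathcal{C}_{3,4})\le 4$ is dual: if a $2$-packing $\mathcal{P}$ of size $5$ existed, every point would have at least one of its three incident lines outside $\mathcal{P}$ (else it would be a triple point of $\mathcal{P}$), so the three lines of $\mathcal{L}\setminus\mathcal{P}$ cover all eight points with nine incidences, and the same inclusion--exclusion identity forces exactly one of these three pairs of lines to meet, producing two disjoint pairs of lines sharing a common line---again impossible in a matching. The only delicate step is verifying the two matchings by direct inspection of the eight listed lines; the rest is a single counting identity applied on each side.
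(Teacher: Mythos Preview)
Your proof is correct and takes a genuinely different route from the paper. The paper's own proof simply exhibits a $2$-packing and a transversal of size four (different witnesses from yours, but the same idea) and then dispatches both reverse inequalities by direct inspection: ``it is not difficult to prove that any set of five lines in $\mathcal{C}_{3,4}$ induces a triple point'' and ``it is easy to check that there is no transversal on three points.'' In other words, the paper leaves $\tau\ge 4$ and $\nu_2\le 4$ as brute-force verifications over the $\binom{8}{3}=56$ point-triples and the $\binom{8}{5}=56$ line-quintuples.

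Your argument replaces both case checks by a single structural observation: the four non-collinear point-pairs and the four disjoint line-pairs each form a perfect matching on the eight points (respectively, eight lines), and then a one-line inclusion--exclusion forces any hypothetical $3$-point transversal to contain two non-collinear pairs sharing a vertex, and any hypothetical $5$-line $2$-packing to leave a $3$-line complement containing two disjoint pairs sharing a line, both contradicting the matching structure. This buys you an explanation rather than a verification, and it makes the self-duality of the two bounds transparent; the price is the one-time inspection needed to identify the two matchings, which is lighter than the paper's implicit exhaustive checks. Either approach is adequate for this small proposition, but yours is the more informative one.
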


\begin{proof}
Since the set of lines $$\{\{p,x_2,y_4\},\{q,x_3,y_3\},\{x_1,x_3,y_4\},\{x_1,x_2,y_3\}\}$$
induces no triple point, then $\nu_2(\mathcal{C}_{3,4})\geq4$. On the other hand, it is not difficult
to prove that any set of five lines in $\mathcal{C}_{3,4}$ induces a triple point. Thus $\nu_2(\mathcal{C}_{3,4})=4$.

Since $\{x_1,x_2,y_1,y_4\}$ is a transversal, then $\tau(\mathcal{C}_{3,4})\leq4$. On the other hand, it is easy to
check that  there is no transversal on three points. Thus $\tau(\mathcal{C}_{3,4})=4$.
\end{proof}

\begin{lemma}\label{lem:Delta=3}
Let $(P,\mathcal{L})$ be a linear system with $\nu_2(P,\mathcal{L})=4$, and $\Delta(P,\mathcal{L})=3$. If $(P,\mathcal{L})\not\simeq%
\mathcal{C}_{3,4}$, then $\tau(P,\mathcal{L})\leq3$.
\end{lemma}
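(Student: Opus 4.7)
I will argue the contrapositive: assuming $\tau(P,\mathcal{L}) \geq 4$, I will show $(P,\mathcal{L}) \simeq \mathcal{C}_{3,4}$. Fix a triple point $p \in X_3$, write $\mathcal{L}_p = \{l_1, l_2, l_3\}$, and set $\mathcal{L}' = \mathcal{L} \setminus \mathcal{L}_p$ with induced subsystem $(P',\mathcal{L}')$. Since $|\mathcal{L}|>4$ and the three $l_i$ concur at $p$, we have $|\mathcal{L}'| \geq 2$, and at most two lines of $\mathcal{L}_p$ can ever enter a $2$-packing of $\mathcal{L}$. Because $\{p\}$ already covers $\mathcal{L}_p$, the assumption $\tau(P,\mathcal{L}) \geq 4$ forces $\tau(P',\mathcal{L}') \geq 3$.

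The easy subcases I dispatch first. If $\nu_2(\mathcal{L}') \leq 2$ with $|\mathcal{L}'| \geq 3$, the $2$-Helly property of linear systems yields a common point $q$ of $\mathcal{L}'$, so $\{p,q\}$ is a transversal of size $2$; if $|\mathcal{L}'|=2$, picking one point on each of the two lines already gives $\tau(P,\mathcal{L}) \leq 3$. If $\nu_2(\mathcal{L}') = 3$ and $|\mathcal{L}'|>3$, Proposition~\ref{prop:3,2} gives $\tau(P',\mathcal{L}') \leq 2$, hence $\tau \leq 3$. In the degenerate subcase $\nu_2(\mathcal{L}')=3$ with $|\mathcal{L}'|=3$, the three lines of $\mathcal{L}'$ cannot be pairwise disjoint, for otherwise $\mathcal{L}' \cup \{l_1, l_2\}$ would form a $2$-packing of size $5$ in $\mathcal{L}$ (no three of these five lines can be concurrent, as $\mathcal{L}'$-lines miss $p$ and would have to coincide pairwise to meet an $l_i$ doubly), contradicting $\nu_2(P,\mathcal{L})=4$; hence two of them share a point and $\tau(P',\mathcal{L}') \leq 2$, again giving $\tau \leq 3$. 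All of these contradict $\tau \geq 4$.

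The main case is therefore $\nu_2(\mathcal{L}') = 4$, and I fix a $2$-packing $\{m_1,m_2,m_3,m_4\} \subseteq \mathcal{L}'$. For each $i \in \{1,2,3\}$ the five-line set $\{l_i,m_1,m_2,m_3,m_4\}$ must contain a triple point, and since the $m_j$'s are triplewise disjoint this triple point must lie on $l_i$ together with exactly two of the $m_j$'s. Hence each $l_i$ passes through a double point of some pair $\{m_j,m_k\}$, and the condition $\Delta(P,\mathcal{L})=3$ bounds how many such incidences each double point and each $m_j$ can carry. The same argument applies to every further line of $\mathcal{L}\setminus(\mathcal{L}_p \cup \{m_1,\ldots,m_4\})$: it too must meet two of the $m_j$'s at a common point, for otherwise the $2$-packing could be enlarged to size $5$. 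Tallying, along each $m_j$, the double points with the other $m_k$'s and the additional triple points forced by $\nu_2=4$, together with the $3$-regularity of $X_3$, freezes the incidence data.

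The main obstacle is the exhaustive enumeration at this last stage: one must show that whenever $\tau(P,\mathcal{L})=4$, the triple points and double points of $\{m_1,\ldots,m_4\}$ assemble into the unique $3$-regular, $3$-uniform, $8$-point, $8$-line configuration $\mathcal{C}_{3,4}$ from Definition~\ref{def:cucarachita}, while any deviation either lowers $\tau$ to~$3$ (by exhibiting a $3$-element hitting set built from two double points of the $m_j$'s and one further point covering the remaining $l_i$'s) or else violates $\Delta=3$ or $\nu_2=4$. Once the incidence structure of the survivors matches that listed in the description of $\mathcal{C}_{3,4}$, the isomorphism after deleting degree-$1$ vertices (in the sense of the isomorphism introduced in the preceding definition) is immediate, completing the proof.
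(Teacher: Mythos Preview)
Your overall plan is coherent but it diverges from the paper's argument at the key step, and the part you flag as ``the main obstacle'' is in fact where all the work lies and is not carried out.

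The paper does \emph{not} anchor the analysis on a single triple point $p$ and a $2$-packing $\{m_1,\dots,m_4\}$ in $\mathcal{L}\setminus\mathcal{L}_p$. Instead it removes the pencils at \emph{two} triple points $p$ and $q$ simultaneously and studies $\mathcal{L}''=\mathcal{L}\setminus(\mathcal{L}_p\cup\mathcal{L}_q)$. A short pigeonhole argument (using $\Delta=3$) shows that if $|\mathcal{L}''|\ge 3$ then $\nu_2(\mathcal{L}'')=2$, hence $\tau\le 3$; so the only case left is $\mathcal{L}''=\{l_1,l_2\}$ with $l_1\cap l_2=\emptyset$. From there every pencil line must hit both $l_1$ and $l_2$, and a brief analysis of how the three $q$-lines thread through the six intersection points $l_{p_i}\cap l_j$ yields either an explicit $3$-transversal or exactly $\mathcal{C}_{3,4}$. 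This two-pencil reduction is what makes the final case analysis short.

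Your one-pencil-plus-four-line setup is a legitimate alternative decomposition, but the enumeration you defer is substantially heavier than you indicate. With four $m_j$'s there are up to six pairwise intersections, each of which may or may not exist and (by $\Delta=3$) may carry at most one further line; a single $l_i$ can pass through \emph{two} of these double points (this actually happens in $\mathcal{C}_{3,4}$: if $p$ is your anchor, one $l_i$ hits both $m_1\cap m_4$ and $m_2\cap m_3$), so the ``each $l_i$ picks one pair'' picture is too coarse. You would also have to bound $|\mathcal{L}|$ and exclude configurations with $9$ or $10$ lines that your tally allows a priori. None of this is impossible, but it is not the routine check your last paragraph suggests; the paper's device of peeling off a \emph{second} pencil is precisely what collapses this enumeration to a two-line residual and a handful of forced incidences.
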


\begin{proof}
Let $p$ and $q$ be two points of $P$ such that $deg(p)=3$ and $deg(q)=\max\{deg(x):x\in P\setminus\{p\}\}$. Assume $\deg(q)=3$,
otherwise the statement holds true, since the set of lines $\mathcal{L}\setminus\{l\}$ with $l\in\mathcal{L}_p$ induces no
triple point, and as $|\mathcal{L}\setminus\mathcal{L}_p|\leq2$, then $\tau(P,\mathcal{L})\leq3$ as Lemma \ref{lem:Delta=3} states.
Let $(P'',\mathcal{L}'')$ be the linear subsystem induced by $\mathcal{L}''=\mathcal{L}\setminus(\mathcal{L}_p\cup\mathcal{L}_q)$.
Suppose that $|\mathcal{L}''|\geq3$. We claim that $\nu_2(P'',\mathcal{L}'')=2$ from which it follows by Proposition \ref{prop:helly}
that $\tau(P'',\mathcal{L}'')=1$. Hence Lemma \ref{lem:Delta=3} is proven in this case. To verify the claim
suppose to the contrary that there exists a set of three lines $\{l_1, l_2, l_3\}$ of $\mathcal{L}''$ inducing no triple points.
This set of three lines induces at most three double points $X=\{x_1,x_2,x_3\}$. Since $\Delta(P,\mathcal{L})=3$, by the Pigeonhole Principle there are at least two lines $l_4,l_5\in\mathcal{L}_p\cup\mathcal{L}_q$, which do not contain any point of $X$. Therefore, the set $\{l_1, l_2, l_3,l_4,l_5\}$ induces no triple point in $(P,\mathcal{L})$,
a contradiction to the hypothesis $\nu_2(P,\mathcal{L})=4$. Suppose that $|\mathcal{L}''|\leq2$. Assume that $\mathcal{L}''=\{l_1,l_2\}$
with $l_1\cap l_2=\emptyset$, otherwise the statement holds true. We claim that every line $l\in(\mathcal{L}_p\cup\mathcal{L}_q)\setminus(\mathcal{L}_p\cap\mathcal{L}_q)$ satisfies $l\cap l_1\neq\emptyset$, and $l\cap l_2\neq\emptyset$.
To verify the claim suppose to the contrary that there exists a line $l\in(\mathcal{L}_p\cup\mathcal{L}_q)\setminus(\mathcal{L}_p\cap\mathcal{L}_q)$,
such that $l\cap l_2=\emptyset$. Without loss of generality assume that $l\in\mathcal{L}_p$. By Pigeonhole Principle there are at least two
lines $l_{q_1},l_{q_2}\in\mathcal{L}_{q}$, such that $l\cap l_1\cap l_{q_1} =\emptyset$, and $l\cap l_1\cap l_{q_2} =\emptyset$. Therefore, the set $\{l,l_1,l_2,l_{q_1},l_{q_2}\}$ induces no triple points, a contradiction to the hypothesis $\nu_2(P,\mathcal{L})=4$.
Let $\mathcal{L}_p=\{l_{p_1},l_{p_2},l_{p_3}\}$, and $\mathcal{L}_q=\{l_{q_1},l_{q_2},l_{q_3}\}$.

\textbf{Case $1$:} Suppose that $\mathcal{L}_p\cap\mathcal{L}_q\neq\emptyset$. Let $\{a\}=l_{p_1}\cap l_1$, $\{b\}=l_{p_2}\cap l_1$,
$\{c\}=l_{p_1}\cap l_2$, $\{d\}=l_{p_2}\cap l_2$, where $l_{p_1},l_{p_2}\in\mathcal{L}_p\setminus(\mathcal{L}_p\cap\mathcal{L}_q)$,
then $\{a,d,q\}$ is a transversal, and the statement holds true.

\begin{figure}[t]
  \begin{center}
    \subfigure[]{\includegraphics[height =4cm]{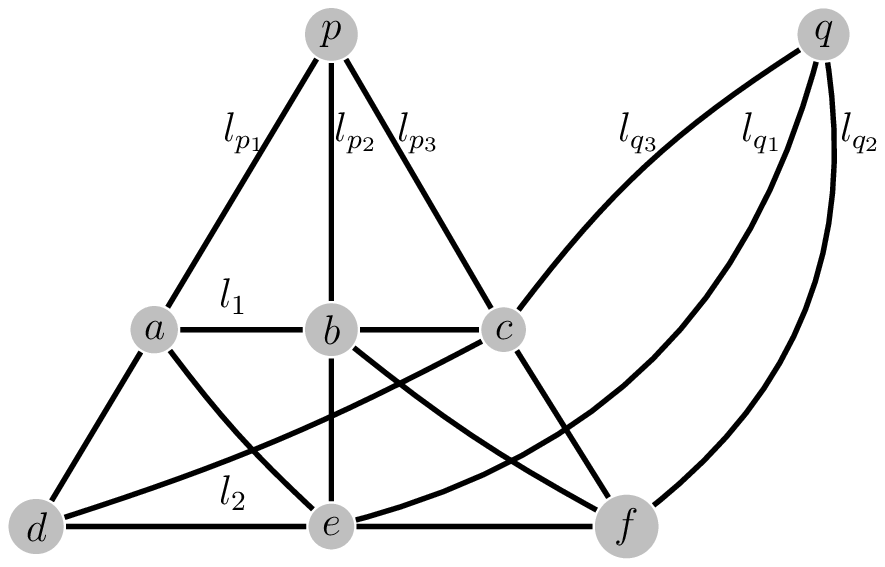}}
    \subfigure[]{\includegraphics[height=4cm]{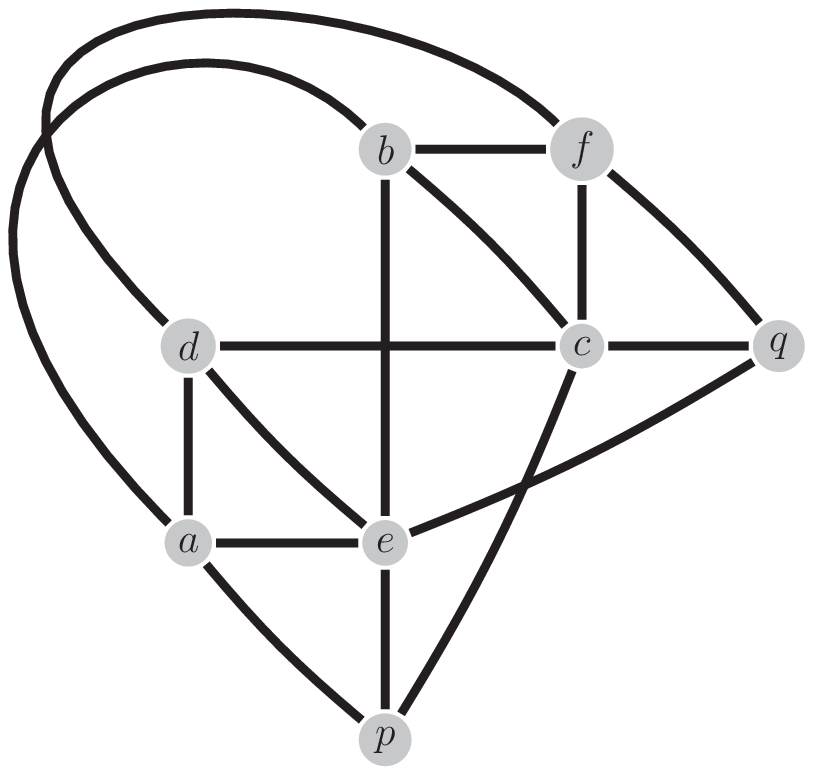}}
  \end{center}
  \caption{}\label{fig:cucarachita}
\end{figure}

\textbf{Case $2$:} Suppose that $\mathcal{L}_p\cap\mathcal{L}_q=\emptyset$. Let $\{a\}=l_{p_1}\cap l_1$, $\{b\}=l_{p_2}\cap l_1$, $\{c\}=l_{p_3}\cap l_1$,
$\{d\}=l_{p_1}\cap l_2$, $\{e\}=l_{p_2}\cap l_2$, and $\{f\}=l_{p_3}\cap l_2$. As $l_{q_i}\cap l_j\neq\emptyset$, for $i=1,2,3$ and $j=1,2$, then given $l_{q_i}\in\mathcal{L}_q$ there exists $l_{p_{s_i}},l_{p_{r_i}}\in\mathcal{L}_p$, $l_{p_{s_i}}\neq l_{p_{r_i}}$, such that
$l_{q_i}\cap l_{p_{r_i}}\cap l_1\neq\emptyset$, and $l_{q_i}\cap l_{p_{s_i}}\cap l_2\neq\emptyset$ (since $l_{q_i}$ induces a triple point on the $2$-packing
$\{l_1,l_2,l_{p_{r_i}},l_{p_{s_i}}\}$, $\{l_1,l_2,l_{p_{s_i}},l_{p_{t_i}}\}$, and $\{l_1,l_2,l_{p_{r_i}},l_{p_{t_i}}\}$, where $\mathcal{L}_p=\{l_{p_{r_i}},l_{p_{s_i}},l_{p_{t_i}}\}$). Let $A_i=\{l_{p_{r_i}},l_{p_{s_i}}\}$ be the set of such lines of $l_{q_i}$. By linearly we have that $|A_i\cap A_j|=1$, for $1\leq i<j\leq3$, and $A_1\cap A_2\cap A_3=\emptyset$, where $A_1, A_2$ and $A_3$ are the corresponding set of lines of
$l_{q_1}, l_{q_2}$ and $l_{q_3}$ respectively. Therefore, either $l_{q_1}\ni a,e$, $l_{q_2}\ni b,f$, and $l_{q_3}\ni d,c$ or $l_{q_1}\ni a,f$,
$l_{q_2}\ni b,d$, and $l_{q_3}\ni c,e$. Without loss of generality assume that $l_{q_1}\ni a,e$, $l_{q_2}\ni b,f$ and $l_{q_3}\ni d,c$
(in the other case we obtain the same linear system, namely the resultant linear systems are isomorphic).

If all three intersections $l_{q_1}\cap l_{p_3}$,  $l_{q_2}\cap l_{p_1}$ and $l_{q_3}\cap l_{p_2}$ are empty, then $(P,\mathcal{L})\simeq\mathcal{C}_{3,4}$, otherwise one of three sets $\{b,d,l_{q_1}\cap l_{p_3}\}$, $\{a,f,l_{q_3}\cap l_{p_2}\}$, $\{c,e,l_{q_2}\cap l_{p_1}\}$ provides a three point transversal. Therefore, the set of points $\{b,d,l_{q_1}\cap l_{p_3}\}$ or $\{a,f,l_{q_3}\cap l_{p_2}\}$ or
$\{c,e,l_{q_2}\cap l_{p_1}\}$ is a transversal of $(P,\mathcal{L})$. Hence, $\tau(P,\mathcal{L})\leq3$ as Lemma \ref{lem:Delta=3} states.
\end{proof}
\section{The case when $\Delta(P,\mathcal{L})=4$}\label{sec:Delta=4}

As in the previous section we begin this section by introducing some terminology to describe linear systems $(P,\mathcal{L})$
with $\Delta(P,\mathcal{L})=4$, and $\nu_2(P,\mathcal{L})=\tau(P,\mathcal{L})=4$.

\begin{definition}
Given a linear system $(P,\mathcal{L})$, we will call a
\emph{triangle} $\mathcal{T}$ of $(P,\mathcal{L})$ as the linear
system induced by three points in general position (non collinear)
and three lines induced by them. 
\end{definition}

\begin{definition}
Consider the projective plane
$\Pi_3$ and a triangle $\mathcal{T}$ of $\Pi_3$. Define
$\mathcal{C}=(P_\mathcal{C},\mathcal{L}_\mathcal{C})$ be the linear
system obtained from $\Pi_3$ by deleting $\mathcal{T}$.
\end{definition}
The linear system
$\mathcal{C}=(P_\mathcal{C},\mathcal{L}_\mathcal{C})$ just defined
has ten points, and ten lines, described as:
\begin{eqnarray*}
P_\mathcal{C}&=&\{p,q,x_1,x_2,x_3,y_1,y_2,y_3,y_4,y_5\}, \\
\mathcal{L}_\mathcal{C}&=&\{\{p,y_1,y_2,y_3\},\{q,y_1,y_4,y_5\},%
\{x_1,x_2,y_3,y_5\}, \{x_1,x_3,y_2,y_4\},\{p,x_2,y_4\}, \\
&&\{p,x_3,y_5\},\{p,q,x_1\},\{q,x_2,y_2\},
\{q,x_3,y_3\},\{x_2,x_3,y_1\}\},
\end{eqnarray*}
and depicted in Figure \ref{fig:PP3}.

\begin{figure}[t]
\begin{center}
\includegraphics[height =4.5cm]{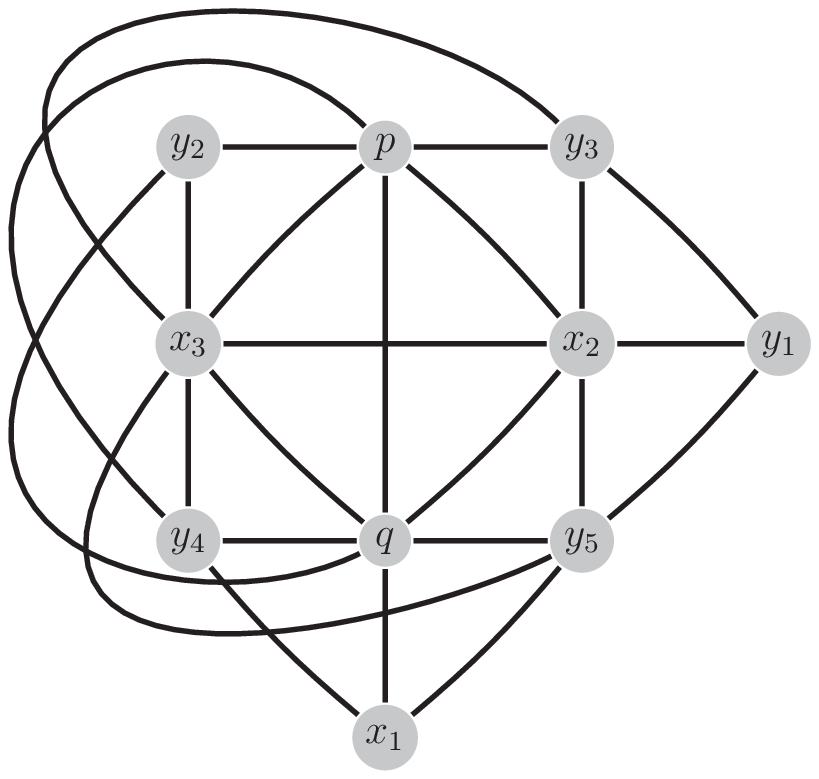}
\end{center}
\caption{} \label{fig:PP3}
\end{figure}

Below we present as a remark some proprieties of $\mathcal{C}$.

\begin{remark}
\label{rmk:propiedadescucaracha}

\

\begin{itemize}
\item $3\leq deg(x)\leq4$, for every $x\in P_\mathcal{C}$,

\item $3\leq|l|\leq4$, for every $l\in\mathcal{L}_\mathcal{C}$,

\item $deg(x)=4$, if and only if, $x$ is adjacent to every $y\in P_\mathcal{C}\setminus\{x\}$,

\item $|l|=4$, if and only if, $l\cap l^{\prime }\neq\emptyset$, for every $l^{\prime }\in \mathcal{L}_\mathcal{C}\setminus\{l\}$,

\item there are no three collinear vertices of degree four,

\item for every $l\in\mathcal{L}_\mathcal{C}$ there exists at most one
line $l^{\prime }\in\mathcal{L}_\mathcal{C}\setminus\{l\}$, such
that $l\cap l^{\prime }=\emptyset$.
\end{itemize}
\end{remark}

\begin{definition}\label{def:cucaracha}
We define $\mathcal{C}_{4,4}$ to be the family of linear systems $(P,\mathcal{L})$ with $\nu_2(P,\mathcal{L})=4$,
such that:

\begin{itemize}
\item[i)] $\mathcal{C}$ is a linear subsystem of $(P,\mathcal{L})$,

\item[ii)] $(P,\mathcal{L})$ is a linear subsystem of $\Pi_3$,
\end{itemize}

this is $\mathcal{C}_{4,4}=\{(P,\mathcal{L}):\mathcal{C}\subseteq(P,\mathcal{%
L})\subseteq\Pi_3\mbox{ and $\nu_2(P,\mathcal{L})=4$}\}$.
\end{definition}

In the next Proposition \ref{prop:igualdadcucaracha} and Lemma \ref{lem:Delta=4} we prove that  if $(P,\mathcal{L})$ satisfies $\nu_2(P,\mathcal{L})=4$ and $\Delta(P,\mathcal{L})=4$, then $\tau(P,\mathcal{L})\leq4$; moreover the equality holds only if $(P,\mathcal{L})\in\mathcal{C}_{4,4}$.

Before continuing we need
some notation for the understand the remainder of this paper. Let $%
(P^{\prime },\mathcal{L}^{\prime })$ be a linear subsystem of a
linear
system $(P,\mathcal{L})$, then we denote $\mathcal{L}\setminus\mathcal{L}%
^{\prime }$ as $\{l\in\mathcal{L}:l^{\prime }\not\subseteq l, l^{\prime }\in%
\mathcal{L}^{\prime }\}$.

\begin{proposition}
\label{prop:igualdadcucaracha}
If $(P,\mathcal{L})\in\mathcal{C}_{4,4}$, then $\tau(P,\mathcal{L})=4$.
\end{proposition}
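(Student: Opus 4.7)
The plan is to show both $\tau(P,\mathcal{L})\le 4$ and $\tau(P,\mathcal{L})\ge 4$.

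For the upper bound, I would exhibit a four-point transversal using a $4$-point line of $\mathcal{C}$. From the explicit description of $\mathcal{L}_{\mathcal{C}}$, the set $L=\{p,y_1,y_2,y_3\}$ is a line of $\mathcal{C}$ of cardinality $4$, and in the ambient $\Pi_3$ it is also a $4$-point line (no triangle vertex occurs on it). Since $(P,\mathcal{L})\subseteq\Pi_3$ and every two distinct lines of $\Pi_3$ meet in a unique point, the intersection of $L$ with any other line of $\Pi_3$ is a point lying in $L\subseteq P_{\mathcal{C}}\subseteq P$; hence $L$ hits every line of $\mathcal{L}$, so $\tau(P,\mathcal{L})\le 4$.

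For the lower bound, I would suppose that $T\subseteq P$ is a transversal with $|T|=3$ and derive a contradiction by showing that some line of $\mathcal{L}$ coming from $\mathcal{L}_{\mathcal{C}}$ is missed by $T$. Let $a,b,c$ denote the three triangle points of $\Pi_3$ deleted to form $\mathcal{C}$, and set $k=|T\cap\{a,b,c\}|$. Two facts are central: (i) in $(P,\mathcal{L})$, each triangle vertex $v\in\{a,b,c\}$ covers exactly two of the ten lines that come from $\mathcal{L}_{\mathcal{C}}$ (namely the two non-triangle $\Pi_3$-lines through $v$), and the three pairs of lines so covered are pairwise disjoint; (ii) among points of $P_{\mathcal{C}}$, only $p,q,x_2,x_3$ have $\mathcal{C}$-degree $4$, and any two of them share a common $\mathcal{C}$-line, so any pair of degree-$4$ points covers exactly $7$ of the ten $\mathcal{C}$-lines, while any three points of $P_{\mathcal{C}}$ cover at most $9$ (via inclusion-exclusion, using that no three of $p,q,x_2,x_3$ are collinear in $\mathcal{C}$). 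These two facts combine to show that the number of distinct lines of $\mathcal{L}_{\mathcal{C}}$ hit by $T$ is at most $9,8,7,6$ for $k=0,1,2,3$ respectively; in every case some line of $\mathcal{L}$ coming from $\mathcal{L}_{\mathcal{C}}$ is missed by $T$, giving the desired contradiction and hence $\tau(P,\mathcal{L})\ge 4$.

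The main obstacle is verifying the counts in the intermediate case $k=1$, in which the two $\mathcal{C}$-lines covered by the triangle vertex $v\in T$ may overlap with those covered by the remaining pair of points; going through each choice of $v\in\{a,b,c\}$ and each of the six pairs of degree-$4$ points against the incidence table of $\mathcal{L}_{\mathcal{C}}$ produces the bound and pins down which $\mathcal{C}$-line is missed in each subcase. Once this is handled, the cases $k=2$ and $k=3$ are routine (the second and third points simply cannot make up the deficit of $6$ or more lines left by the triangle vertices), and $k=0$ is exactly the assertion $\tau(\mathcal{C})\ge 4$, which follows from the same inclusion-exclusion using (ii) and the fact that no three of $p,q,x_2,x_3$ share a $\mathcal{C}$-line.
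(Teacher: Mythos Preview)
Your upper-bound argument is essentially identical to the paper's: a $4$-point line of $\mathcal{C}$ is a full line of $\Pi_3$ and therefore meets every line of $(P,\mathcal{L})\subseteq\Pi_3$.

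Your lower-bound argument is correct but considerably more laborious than the paper's. The paper avoids the case split on $k=|T\cap\{a,b,c\}|$ entirely with a two-line pigeonhole: there are four points of $\mathcal{C}$-degree $4$ (namely $p,q,x_2,x_3$), so any putative $3$-point transversal $T$ omits at least one of them, say $x$; by linearity each point of $T\setminus\{x\}$ lies on at most one of the four lines through $x$, so $T$ covers at most three of these four lines, a contradiction. This works uniformly whether or not $T$ contains triangle vertices, and it needs no incidence table.

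Your route does go through, but note a small gap in the write-up: for $k=0$ you justify the bound $9$ only when all three points of $T$ are degree-$4$, via inclusion--exclusion. The remaining sub-cases (at least one point of $\mathcal{C}$-degree $3$) also give at most $9$, but this uses the additional fact from Remark~\ref{rmk:propiedadescucaracha} that a degree-$4$ point is adjacent to every other point of $P_{\mathcal{C}}$, so the pairwise overlaps in the inclusion--exclusion are still large enough; you should say so explicitly. Similarly, your bounds of $8$ and $7$ for $k=1,2$ rest on the observation that a degree-$4$ point of $\mathcal{C}$ lies on no triangle line, hence the $\Pi_3$-line joining it to any triangle vertex is always a $\mathcal{C}$-line and contributes to the overlap; making this explicit would let you replace the proposed exhaustive table-check for $k=1$ by a short uniform argument.
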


\begin{proof}

As any line of $(P,\mathcal{L})$ of size four is a transversal of
$(P,\mathcal{L})$ (since any line of size four is a transversal of
$\Pi_3$), then $\tau(P,\mathcal{L})\leq4$. Suppose that $(P,\mathcal{L})$ does not have a transversal of cardinality 4, then there is a transversal $\{a,b,c\}$. Since there are 4 points of degree 4 in $(P_\mathcal{C},\mathcal{L}_C)$, by Pigeonhole Principle at least one of them does not belong $\{a,b,c\}$, denote this point by $x$. Since $|\mathcal{L}_x|=4$, then at least one $l\in\mathcal{L}_x$ is not pierced by $\{a,b,c\}$.
\end{proof}

\begin{lemma}
\label{lem:Delta=4} Let $(P,\mathcal{L})$ be a linear system with $\nu_2(P,\mathcal{L})=\Delta(P,\mathcal{L})=4$,
and $|\mathcal{L}|\geq4$. If $(P,\mathcal{L})\not\in\mathcal{C}_{4,4}$, then $\tau(P,\mathcal{L})\leq3$.
\end{lemma}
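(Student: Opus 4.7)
I will follow the same strategy as in Lemma~\ref{lem:Delta=3}: fix a point $p\in P$ with $\deg(p)=4$, write $\mathcal{L}_p=\{l_1,l_2,l_3,l_4\}$, and try to extend $\{p\}$ to a $3$-transversal. Let $(P',\mathcal{L}')$ be the linear subsystem induced by $\mathcal{L}'=\mathcal{L}\setminus\mathcal{L}_p$. It suffices to show that either $\tau(P',\mathcal{L}')\leq 2$, or else the forced incidence structure pushes $(P,\mathcal{L})$ into $\mathcal{C}_{4,4}$, contradicting the hypothesis $(P,\mathcal{L})\not\in\mathcal{C}_{4,4}$.

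The easy preliminary reductions are handled as in previous lemmas. If $|\mathcal{L}'|\leq 2$, then trivially $\tau(P',\mathcal{L}')\leq 2$, so $\tau(P,\mathcal{L})\leq 3$. If $|\mathcal{L}'|\geq 3$ and $\nu_2(P',\mathcal{L}')\leq 2$, Proposition~\ref{prop:helly} gives $\tau(P',\mathcal{L}')=1$ and hence $\tau(P,\mathcal{L})\leq 2$. Thus the interesting situation is $\nu_2(P',\mathcal{L}')\geq 3$: pick a $2$-packing $\{m_1,m_2,m_3\}\subseteq\mathcal{L}'$ with three double points $d_{ab}=m_a\cap m_b$. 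For every pair $l_i,l_j\in\mathcal{L}_p$, the five-line set $\{m_1,m_2,m_3,l_i,l_j\}$ must contain a triple point; since $l_i\cap l_j=\{p\}$ and $p\notin m_k$, the only option is that some $d_{ab}$ lies on $l_i$ or $l_j$. Writing $S=\{l\in\mathcal{L}_p:l\ni d_{ab}\text{ for some }a,b\}$, this forces $|\mathcal{L}_p\setminus S|\leq 1$, so $|S|\in\{3,4\}$. Linearity forbids a line of $\mathcal{L}_p$ from passing through two distinct $d_{ab}$'s (they would both lie on a common $m_k$), and $\Delta(P,\mathcal{L})=4$ forces $\deg(d_{ab})\leq 4$, so each $d_{ab}$ is met by at most two lines of $\mathcal{L}_p$.

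The remainder is a structural case analysis on the distribution of $S$ among $d_{12},d_{13},d_{23}$ (up to symmetry, the partitions $(3,0,0)$, $(2,1,0)$ and $(1,1,1)$ when $|S|=3$, and $(2,2,0)$ and $(2,1,1)$ when $|S|=4$). In each pattern I would attempt to exhibit a pair of points $\{q_1,q_2\}\subseteq P'$ covering $\mathcal{L}'$ by the same kind of pigeonhole reasoning used in the proof of Lemma~\ref{lem:Delta=3}: two of the $d_{ab}$ already cover $\{m_1,m_2,m_3\}$, and the forced incidences of $\mathcal{L}_p$ with these double points limit where any remaining line $n\in\mathcal{L}'\setminus\{m_1,m_2,m_3\}$ can live. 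Any such $n$ must also behave well against the $2$-packing $\{l_i,l_j,m_a,m_b\}$-tests for $\nu_2=4$, which ties $n$ to the existing double points or to points of $\mathcal{L}_p$. Whenever a $3$-transversal is not directly produced, the accumulated incidences exhaust the allowed degrees ($\Delta=4$) and line-sizes, at which point one verifies that every line of $\mathcal{L}$ appears as a line of $\Pi_3$ via a fixed embedding, and that the subsystem $\mathcal{C}$ of Definition of $\mathcal{C}_{4,4}$ sits inside $(P,\mathcal{L})$. Combined with $\nu_2(P,\mathcal{L})=4$, this gives $(P,\mathcal{L})\in\mathcal{C}_{4,4}$, contradicting the assumption.

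The main obstacle, as in the $\Delta=3$ analysis, is the combinatorial bookkeeping in this last step: carrying five or six concurrent constraints (linearity, $\Delta\leq 4$, $\nu_2=4$, the fixed $2$-packing, and the previously-chosen incidences) through each sub-case without missing a way to select $\{q_1,q_2\}$. I expect that the symmetry among the $d_{ab}$'s and among the $l_i$'s substantially cuts down the casework, but ultimately the subcase where $|S|=4$ with distribution $(2,1,1)$ is the one that comes closest to forcing a full copy of $\mathcal{C}$, and it is there that the embedding into $\Pi_3$ will need to be made explicit to conclude $(P,\mathcal{L})\in\mathcal{C}_{4,4}$.
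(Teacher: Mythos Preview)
Your overall strategy---delete $\mathcal{L}_p$ and try to find a $2$-transversal of the remainder, else force the system into $\mathcal{C}_{4,4}$---is in the same spirit as the paper's proof, but the case enumeration you set up is incorrect and the plan omits the structural ingredient that actually makes the argument go through.

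First, the error. You argue that each double point $d_{ab}$ can lie on at most two lines of $\mathcal{L}_p$ because $\deg(d_{ab})\leq 4$. But every line of $\mathcal{L}_p$ passes through $p$, and $d_{ab}\neq p$ (since $p\notin m_a,m_b$); by linearity there is at most \emph{one} line through both $p$ and $d_{ab}$. Hence each $d_{ab}$ meets at most one line of $\mathcal{L}_p$, so the distributions $(3,0,0)$, $(2,1,0)$, $(2,2,0)$, $(2,1,1)$ are all impossible and only $(1,1,1)$ with $|S|=3$ survives. In particular the sub-case you flag as ``closest to forcing a full copy of $\mathcal{C}$'' cannot occur, so the difficulty is not where you located it.

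Second, the missing ingredient. In the surviving case three lines of $\mathcal{L}_p$ hit the three double points and one line $l_4\in\mathcal{L}_p$ misses them all; from here your plan is to cover $\mathcal{L}'$ by two points via further $2$-packing tests, but you give no mechanism for this. The paper does something different: it fixes a \emph{second} point $q$ with $\deg(q)=\max\{\deg(x):x\neq p\}$, disposes of $\deg(q)\leq 3$ separately (the case $\deg(q)=3$ by reduction to Lemma~\ref{lem:Delta=3}), and in the main case $\deg(q)=4$ works with $\mathcal{L}''=\mathcal{L}\setminus(\mathcal{L}_p\cup\mathcal{L}_q)$. The two pencils $\mathcal{L}_p,\mathcal{L}_q$ then each contribute three lines through the double points of a $2$-packing in $\mathcal{L}''$, and a short argument forces a common line through $p$, $q$ and one of those double points. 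From that configuration the paper reconstructs $\mathcal{C}$ explicitly and shows any extra line must be a line of $\Pi_3$. Your one-point approach does not produce the $p$--$q$ line and the double pencil structure that drive this reconstruction, so as written the plan does not reach the endgame; to complete it you would almost certainly have to introduce a second high-degree point and converge to the paper's argument.
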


\begin{proof}

Recall that $\nu_2(P,\mathcal{L})=4$ implies that any set of five lines induces a triple point. Let $p$ and $q$ be two points of $P$,
such that $deg(p)=4$, and $deg(q)=\max\{deg(x):x\in P\setminus\{p\}\}$. Assume that $deg(q)=4$, otherwise the statement holds true, since if $deg(q)\leq2$ the set of lines $\mathcal{L}\setminus\{l,l'\}$, with $l,l'\in\mathcal{L}_p$, induces no triple point, and as $|\mathcal{L}\setminus\mathcal{L}_p|\leq2$,
then $\tau(P,\mathcal{L})\leq3$. On the other hand, if $deg(q)=3$, then the linear system $(P^\prime,\mathcal{L}^\prime)$ induced by $\mathcal{L}^\prime=\mathcal{L}\setminus\{l_p\}$, with $l_p\in\mathcal{L}_p$, satisfies $\tau(P^\prime,\mathcal{L}^\prime)\leq3$, by Lemma \ref{lem:Delta=3}. Furthermore there exists a transversal $T$ of $(P^\prime,\mathcal{L}^\prime)$ containing the point $p$ (see proof of Lemma \ref{lem:Delta=3}), and therfore $T$ is a transversal of $(P,\mathcal{L})$.

Let $(P'',\mathcal{L}'')$ be the linear subsystem induced by $\mathcal{L}''=\mathcal{L}\setminus(\mathcal{L}_p\cup\mathcal{L}_q)$. Suppose that $|\mathcal{L}''|\leq 2$. Assume that
$\mathcal{L}''=\{l_1,l_2\}$ with $l_1\cap l_2=\emptyset$, otherwise the statement holds true. Proceeding as the proof of Lemma
\ref{lem:Delta=3}, it can be proven that every line $l\in(\mathcal{L}_p\cup\mathcal{L}_q)\setminus(\mathcal{L}_p\cap\mathcal{L}_q)$
satisfies $l\cap l_1\neq\emptyset$, and $l\cap l_2\neq\emptyset$. Without loss of generality assume that there exists a line
$l_q\in\mathcal{L}_q\setminus(\mathcal{L}_p\cap\mathcal{L}_q)$, such that $l_q\cap l_1\cap l_{p_1}\neq\emptyset$ and $l_q\cap l_2\cap
l_{p_2}\neq\emptyset$, where $l_{p_1},l_{p_2}\in\mathcal{L}_p\setminus(\mathcal{L}_p\cap\mathcal{L}_q)$. Then the set $\{l_1,l_2,l_{p_3},l_{p_4},l_q\}$,
where $l_{p_3},l_{p_4}\in\mathcal{L}_p\setminus\{l_{p_1},l_{p_2}\}$, induces no triple point, a contradiction to the hypothesis $\nu_2(P,\mathcal{L})=4$. Suppose that $|\mathcal{L}''|\geq3$. Assume $\nu_2(P'',\mathcal{L}'')\geq3$, otherwise, if $\nu_2(P'',\mathcal{L}'')=2$ from which it follows by Proposition
\ref{prop:helly} that $\tau(P'',\mathcal{L}'')=1$, therefore $\tau(P,\mathcal{L})\leq3$. Let $\{l_1,l_2,l_3\}$ be a set of three
lines of $\mathcal{L}''$ inducing no triple point. This set of three lines induces at most three double points $X=\{x_1,x_2,x_3\}$.
Assume that three lines of $\mathcal{L}_p$, and three lines of $\mathcal{L}_q$ each inside at a point in $X$, otherwise there exist
two lines of $l_4,l_5\in\mathcal{L}_p\cup\mathcal{L}_q$, which do not contain any point of $X$ (by the definition of $deg(q)$), therefore the set of five lines $\{l_1,l_2,l_3,l_4,l_5\}$ induces no triple point, a contradiction to the hypothesis $\nu_2(P,\mathcal{L})=4$.

We claim that there exists one line containing $p$, $q$ and $x$, for some $x\in X$. To verify the claim suppose the contrary. Let $\mathcal{L}_p=\{l_{p_1},l_{p_2},l_{p_3},l_{p_4}\}$, and
$\mathcal{L}_q=\{l_{q_1},l_{q_2},l_{q_3},l_{q_4}\}$, with $(\mathcal{L}_p\setminus\{l_{p_4}\})\cap(\mathcal{L}_q\setminus\{l_{q_4}\})=\emptyset$. Since three lines of $\mathcal{L}_p$, and three lines of $\mathcal{L}_q$ are each incident to a point of $X$, then without loss of generality suppose that
$l_{p_i},l_{q_i}\ni x_i$, for i=1,2,3, and $\{x_{1}\}=l_2\cap l_3$, $\{x_{2}\}=l_3\cap l_1$ and $\{x_{3}\}=l_1\cap l_2$. Then the set
$\{l_1,l_{p_1},l_{p_2},l_{q_1},l_{q_3}\}$ induces no triple point, a contradiction to the hypothesis $\nu_2(P,\mathcal{L})=4$.
Assume that $l_{p,q}\ni x_1$ and $l_{p_i},l_{q_i}\ni x_i$, for $i=2,3$ (see
Figure \ref{fig:pseudocucaracha}), where $l_{p_4}=l_{q_4}=l_{p,q}$. Consider the lines $l_{p_1}$ and $l_{q_1}$, and the following
$2$-packing sets:
\begin{center}
$\mathcal{L}_1=\{l_1,l_2,l_{q_2},l_{pq}\}$,
$\mathcal{L}_2=\{l_1,l_3,l_{q_3},l_{pq}\}$,\\
$\mathcal{L}_3=\{l_1,l_2,l_{p_2},l_{pq}\}$,
$\mathcal{L}_4=\{l_1,l_3,l_{p_3},l_{pq}\}$.
\end{center}

The line $l_{p_1}$ induces a triple point on $\mathcal{L}_1$ and $\mathcal{L}_2$, consequently there must exist intersections
$\{y_2\}=l_2\cap l_{q_2}$ and $\{y_3\}=l_3\cap l_{q_3}$, with $y_2,y_3\in l_{p_1}$, otherwise there exists a set of five lines
$\mathcal{L}_1\cup\{l_{p_1}\}$ or $\mathcal{L}_2\cup\{l_{p_1}\}$ inducing no triple point, a contradiction to the hypothesis
$\nu_2(P,\mathcal{L})=4$. Analogously, the line $l_{q_1}$ induces a triple point on $\mathcal{L}_3$, and $\mathcal{L}_4$. Therefore there
must exist intersections $\{y_4\}=l_2\cap l_{p_2}$, and $\{y_5\}=l_3\cap l_{p_3}$ with $y_4,y_5\in l_{p_2}$. Finally, as
the following set of five lines $\{l_1,l_2,l_3,l_{p_1},l_{q_1}\}$ induces a triple point, there must exists the intersection point
$\{y_1\}=l_1\cap l_{p_1}\cap l_{q_1}$. It is not difficult to prove that the resultant linear system $(P,\mathcal{L})$ (Figure
\ref{fig:cucaracha}) is isomorphic to linear system $\mathcal{C}$. Therefore there exists at least one line $l\in\mathcal{L}\setminus\mathcal{L}_\mathcal{C}$. We claim that each line $l\in \mathcal{L}\setminus\mathcal{L}_\mathcal{C}$ is a line of $\Pi_3$, hence $(P,\mathcal{L})\in\mathcal{C}_{4,4}$,
contradicting the hypothesis $(P,\mathcal{L})\not\in\mathcal{C}_{4,4}$. Before this note that $|\mathcal{L}\setminus\mathcal{L}_\mathcal{C}|\leq3$ (therefore
$|\mathcal{L}|\leq|\mathcal{L}_{\Pi_3}|=13$) since every line of $\mathcal{L}\setminus\mathcal{L}_\mathcal{C}$ induces a triple point
on the $2$-packing $\{l_2,l_3,l_{p_2},l_{p_3}\}$, consequently each line of $\mathcal{L}\setminus\mathcal{L}_\mathcal{C}$ is incident to
one point of $\{x_1,y_4,y_5\}$.

To verify the claim consider the linear system $\mathcal{C}$ depicted in Figure \ref{fig:cucaracha}, and $l$ be a fixed line of
$\mathcal{L}\setminus\mathcal{L}_\mathcal{C}$. We will prove that there exists one line $l'\in\mathcal{L}_{\Pi_3}$, such that $l'=l$.
First we will prove that $l'\subseteq l$. Without losing generality assume $l\ni y_4$ (the same argument is used, if $l\ni x_1$, or
$l\ni y_5$). Line $l$ induces a triple point on the following $2$-packing sets:
\begin{figure}[t!]
  \begin{center}
    \subfigure[]{\includegraphics[height =4.5cm]{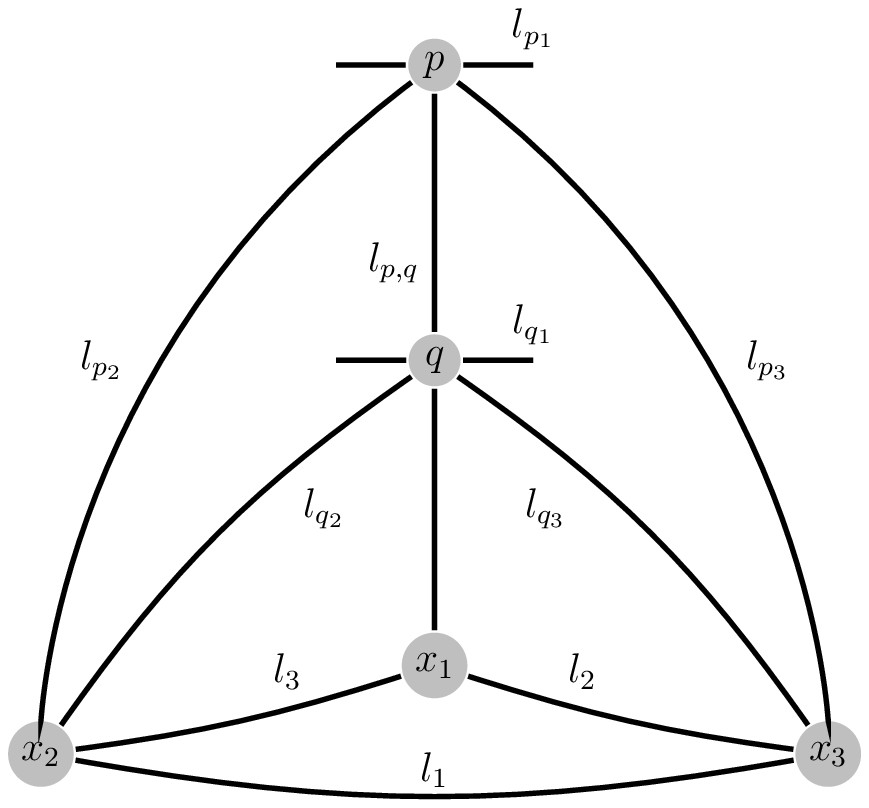}\label{fig:pseudocucaracha}}
    \subfigure[]{\includegraphics[height =4.5cm]{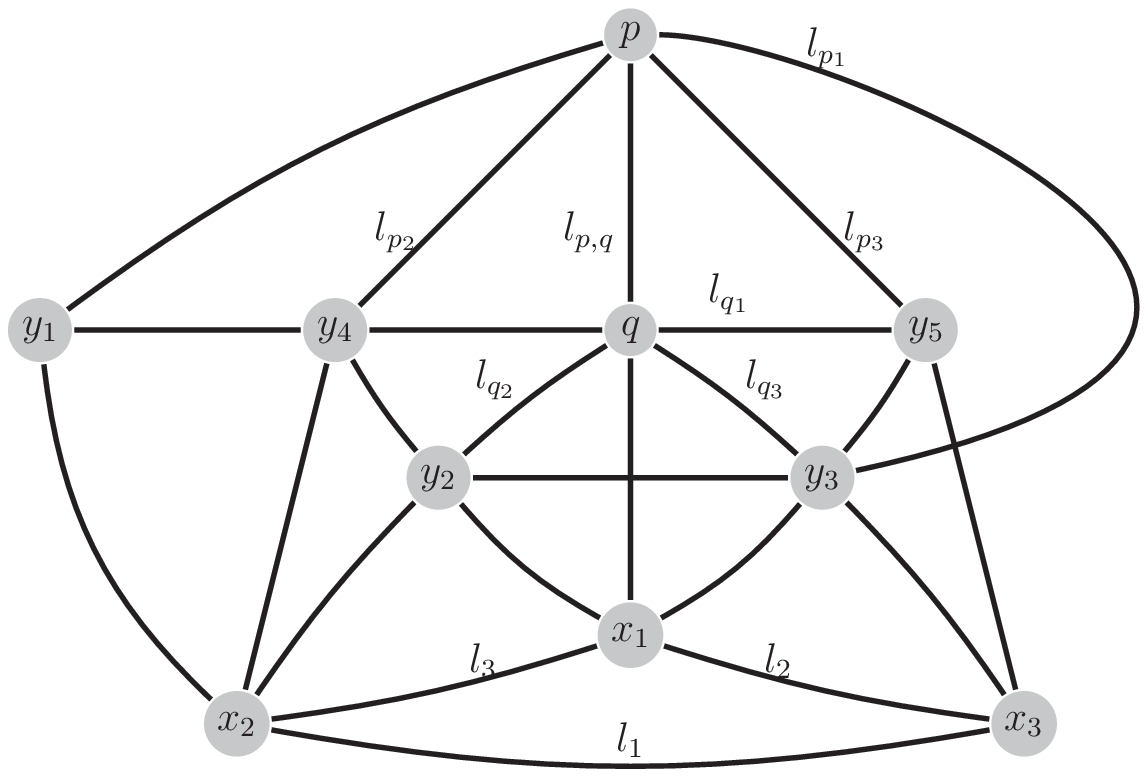}\label{fig:cucaracha}}
  \end{center}
  \caption{}
\end{figure}
$$\mathcal{L}'_1=\{l_1,l_{p_3},l_{q_1},l_{q_2}\},
\mathcal{L}'_2=\{l_1,l_3,l_{q_1},l_{p,q}\},
\mathcal{L}'_3=\{l_3,l_{p_1},l_{q_1},l_{p,q}\},$$
The intersection $\{y_7\}=l_{p_3}\cap l_{q_2}$ and $\{y_8\}=l_1\cap l_{p,q}$ must exists, as well as $y_3,y_7,y_8\in l$ (since
$\{y_3\}\in l\cap l_3\cap l_{p_1}$), otherwise, there must exist a set of five lines $\mathcal{L}'_1\cup\{l\}$, or $\mathcal{L}'_2\cup\{l\}$, or $\mathcal{L}'_3\cup\{l\}$ inducing no triple point, a contradiction to the hypothesis $\nu_2(P,\mathcal{L})=4$. Hence $l'\subseteq l$, where
$l'=\{y_3,y_4,y_7,y_8\}\in\mathcal{L}_{\Pi_3}$ (see Figure \ref{PP3}). To prove that $l\subseteq l'$ is sufficient to verify
that any line $\widetilde{l}$ of $\mathcal{L}\setminus\mathcal{L}_\mathcal{C}$ different of $l$
satisfies $\widetilde{l}\cap l\subseteq l'$, since there are no points of degree one in $l$. Let $\widetilde{l}$ be a line as
before. Without loss of generality assume $y_5\in \widetilde{l}$ (the same argument is used if $l\ni x_1$). Since the line
$\widetilde{l}$ induces a triple point on the $2$-packing $\{l_1,l_3,l_{p_1},l_{p,q}\}$ the intersection $\widetilde{l}\cap
l_1\cap l_{p,q}$ must exist. As $y_8=l\cap l_1\cap l_{p,q}$, then $y_8= \widetilde{l}\cap l'\cap l$, therefore $\widetilde{l}\cap l\in l'$.
\end{proof}

\textbf{Proof of Theorem \ref{thm:main}.} Let $(P,\mathcal{L})$ be a
linear system satisfying the hypothesis of Theorem \ref{thm:main}. If $\Delta(P,%
\mathcal{L})=3$, then by Lemma \ref{lem:Delta=3} we have $\tau(P,\mathcal{L}%
)\leq3$, unless that $(P,\mathcal{L})\simeq \mathcal{C}_{3,4}$ where
by Proposition \ref{prop:(3,4)cucaracha} we have
$\tau(P,\mathcal{L})=4$. On the other hand, if
$\Delta(P,\mathcal{L})=4$, then by Lemma \ref{lem:Delta=4}
we have $\tau(P,\mathcal{L})\leq3$, unless the linear system $(P,%
\mathcal{L})\in\mathcal{C}_{4,4}$ whereby Proposition \ref%
{prop:igualdadcucaracha} we have $\tau(P,\mathcal{L})=4$. Finally, if $%
\Delta(P,\mathcal{L})\geq5$, by Lemma \ref{lem:deg5} we have $\tau(P,%
\mathcal{L})\leq3$. This concludes the proof of Theorem
\ref{thm:main}.\qed

\section{Proof of the Main Theorem}\label{sec:end}

Before continuing with the last part of this paper we need some definitions and results.

\begin{definition}
The \emph{incidence graph} of a set system $(X,\mathcal{F})$, denoted by $B(X,\mathcal{F})$, is a bipartite graph with vertex set
$V=X\cup\mathcal{F}$, where two vertices $x\in X$, and $F\in\mathcal{F}$ are adjacent, if and only if, $x\in F$.
\end{definition}

According to \cite{KKS2008} any straight line system is \emph{Zykov-planar} (see \cite{Zykov}). Zykov proposed to represent the lines of a set system by a subset of the faces of a planar map (map on $\mathbb{R}^{2}$). That is, a set system $(X,\mathcal{F})$ is Zykov-planar, if there exists a planar graph $G$ (not necessarily a simple graph), such that $V(G)=X$, and $G$ can be drawn in the plane
with faces of $G$ two-colored (say red and blue), so that there exists a bijection between the red faces of $G$, and the subsets of
$\mathcal{F}$, such that a point $x$ is incident with a red face, if and only if, it is incident with the corresponding subset. Walsh in
\cite{Walsh} has shown that definition of Zykov is equivalent to the following: A set system $(X,\mathcal{F})$ is Zykov-planar, if and
only if, the incidence graph $B(X,\mathcal{F})$ is planar.

\textbf{Proof of Theorem \ref{thm:main-main}.} By Propositions \ref{prop:helly} and \ref{prop:3,2} we only need to prove the case when $\nu_2=4$.
We consider any linear system $(P,\mathcal{L})$ with $\nu_2(P,\mathcal{L})=4$, and $|\mathcal{L}|>4$. Suppose that
$(P,\mathcal{L})\simeq\mathcal{C}_{3,4}$. We shall prove that $(P,\mathcal{L})$ is not Zykov-planar. Moreover, as $\mathcal{C}_{3,4}$
is a linear subsystem of $\mathcal{C}\in\mathcal{C}_{4,4}$, then any element of $\mathcal{C}_{4,4}$ is not Zykov-planar.
If $(P,\mathcal{L})$ is a straight line system then $(P,\mathcal{L})$ is Zykov-planar, therefore
the incidence graph $B(P,\mathcal{L})$ of $(P,\mathcal{L})$ is a planar graph, but it is not difficult to prove that
$B(P,\mathcal{L})$ is not a planar graph, which is a contradiction. Therefore, there does not exist a straight line representation on $\mathbb{R}^{2}$
of $(P,\mathcal{L})$. On the other hand, if $(P,\mathcal{L})\not\simeq\mathcal{C}_{3,4}$ or
$(P,\mathcal{L})\not\in\mathcal{C}_{4,4}$ with $\nu_2(P,\mathcal{L})=4$, and $|\mathcal{L}|>4$, then by Lemas
\ref{lem:deg5}, \ref{lem:Delta=3}, and \ref{lem:Delta=4} we have $\tau(P,\mathcal{L})\leq3$, as Theorem \ref{thm:main-main} states. \qed

\

{\bf Acknowledgment}

\

The authors thank the referee for many constructive suggestions to improve this paper.


\end{document}